\def\Z{\mathbb Z}
\def\Q{\mathbb Q}
\def\C{\mathbb C}
\newcommand{\an}{\mathrm{an}}
\newcommand{\divv}{\mathrm{div}}
\newcommand{\Div}{\mathrm{Div}}
\newcommand{\rk}{\mathrm{rk}}
\newcommand{\ord}{\mathrm{ord}}
\newcommand{\val}{\mathrm{val}}
\newtheorem{conjecture}{Conjecture}
\newtheorem{theorem}{Theorem}
\newtheorem{lemma}{Lemma}
\newtheorem{proposition}{Proposition}
\theoremstyle{definition}
\newtheorem{remark}{Remark}
\newcommand{\Gal}{\mathrm{Gal}}
\newcommand{\nr}{\mathrm{nr}}
\newcommand{\F}{\mathbb{F}}
\newcommand{\dlog}{\mathrm{dlog}}
\newcommand{\tors}{\mathrm{tors}}
\newcommand{\Jac}{\mathrm{Jac}}
\newcommand{\NP}{\mathrm{NP}}
\newcommand{\R}{\mathbb{R}}
\begin{document}
\title{$p$-adic approaches to unlikely intersections}
\author{Netan Dogra}
\maketitle
\begin{abstract}
In this article we explain the Buium--Coleman approach to the Manin--Mumford conjecture, and outline its generalisations. As an illustration, we give a $p$-adic proof of a theorem of Bombieri, Masser and Zannier on curves in tori.
\end{abstract}
\section{Introduction}
The Mordell conjecture, proved by Faltings \cite{faltings83}, states that if $X$ is a smooth projective geometrically irreducible curve of genus $g>1$ over a number field $K$, then $X(K)$ is finite. The Mordell--Lang conjecture, also proved by Faltings \cite{faltings:general}, may be viewed as saying that the Mordell--Weil theorem is the only arithmetic input needed. More precisely, it states that if $X$ is a subvariety of a complex abelian variety $A$, and $\Gamma <A(\mathbb{C})$ is a subgroup with $\dim _{\Q }\Gamma \otimes _{\Z }\Q $ finite, then the Zariski closure of $X(\C )\cap \Gamma $ is a finite union of translates of abelian subvarieties of $A$.

Later, Richard Pink fitted the Mordell--Lang conjecture into a broader framework that also related to an earlier conjecture of Boris Zilber \cite{pink} \cite{zilber}. This broad conjecture, known as the Zilber--Pink conjecture, includes as special cases the Mordell--Lang conjecture and the Andr\'e--Oort conjecture on CM points in Shimura varieties. In this article we will only discuss this conjecture in the case of subvarieties of semiabelian varieties. Given a semiabelian variety $G/\mathbb{C}$, and a positive integer $m$, we write $G^{[m]}$ to denote the union of complex points of all codimension $\geq m$ subgroups of $G$.
\begin{conjecture}[Pink]\label{conj:pink}
Let $X/\mathbb{C}$ be a subvariety of dimension $d$ of a semiabelian variety $G$, such that $X$ is not contained in a proper abelian subgroup of $G$. Then $X\cap G^{[d+1]}$ is not Zariski dense.
\end{conjecture}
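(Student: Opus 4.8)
The plan is to transport the $p$-adic Buium--Coleman proof of Manin--Mumford to the semiabelian setting. A spreading-out argument first reduces to the case in which $G$ and $X\subseteq G$ are defined over a number field $K$; fix a finite place $\mathfrak p$ of $K$ above a rational prime $p$ at which $G$ has good reduction, $X$ has smooth reduction, $p$ is unramified in $K$, and $p$ exceeds a bound depending on $(G,X)$. Let $\mathcal O$ be the valuation ring of $K_{\mathfrak p}$, put $n=\dim G$, and let $\widehat G$ be the formal group of the semiabelian $\mathcal O$-model, with $p$-adic logarithm $\log_G\colon\widehat G\to\Lie(G)\otimes\C_p\cong\C_p^{\,n}$, a homomorphism intertwining morphisms of semiabelian varieties with their Lie-algebra maps. (In Buium's variant one replaces $\log_G$ by a $\delta$-character built from a Frobenius lift, which has the advantage of converging on all of $\widehat G$ and of being $\delta$-algebraic; the criterion below is unchanged.) Stratifying $G(\C_p)$ into residue disks and choosing in each disk a suitable lift of the reduction, it suffices to bound $X\cap G^{[d+1]}$ one disk at a time; I describe the identity disk, the others being analogous after translation.

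The mechanism is that membership of a point $P$ in a subgroup $H\le G$ with $\operatorname{codim}H\ge d+1$ is detected $p$-adically by a \emph{rational-linearity} condition on $\log_G(P)$: since $\log_G$ restricts on $\widehat H$ to $\log_H$, if $P\in\widehat H(\C_p)$ then $\log_G(P)\in\Lie(H)\otimes\C_p$, and $\Lie(H)$ is a subspace of $\Lie(G)$ of dimension $\le n-d-1$, rational for the natural $\overline{\Q}$-structure on $\Lie(G)$ (cocharacter lattice of the toric part, \'etale or de Rham homology of the abelian part). Hence $\log_G\!\big(X\cap G^{[d+1]}\cap\widehat G\big)\subseteq Y\cap\Lambda$, where $Y:=\log_G(X\cap\widehat G)$ is a $d$-dimensional locally analytic subvariety of $\C_p^{\,n}$ and $\Lambda$ is the union of all $\overline{\Q}$-rational affine subspaces of $\C_p^{\,n}$ of dimension $\le n-d-1$. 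One wants the preimage in $X$ of $Y\cap\Lambda$ to be not Zariski dense. The dimension count is favourable --- $\dim Y+(n-d-1)=n-1<n$, so a \emph{generic} subspace in $\Lambda$ misses $Y$ --- so the whole difficulty lies in summing over the countably many special subspaces that do meet $Y$.

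To finish one combines functional transcendence with counting. A $p$-adic (or formal) Ax--Schanuel theorem for $\log_G$, to the effect that any algebraic relation holding identically on a component of $Y\cap(\text{linear subspace})$ is forced by an algebraic subgroup of $G$, confines each positive-dimensional component of $X\cap G^{[d+1]}$ to a translate of a proper subgroup; since $X$ generates $G$, these translates are proper subvarieties of $X$, so the positive-dimensional part is not Zariski dense and only the isolated points remain. For those one uses that the $K_{\mathfrak p}$-subanalytic germ $Y$ is tame (Denef--van den Dries, Cluckers--Lipshitz) together with the non-archimedean Pila--Wilkie theorem (Cluckers--Comte--Loeser): the points of $Y$ lying on $\overline{\Q}$-rational subspaces of height $\le T$ but off the semialgebraic locus of $Y$ number $O_\varepsilon(T^\varepsilon)$. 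On the other hand, a Galois-orbit lower bound --- a point $P$ on a codimension-$\ge d+1$ subgroup of height $T$ has $\gg T^\delta$ conjugates over $K$, each again on $X$ and on the conjugate subgroup, of comparable height --- would, were $X\cap G^{[d+1]}$ Zariski dense, yield for arbitrarily large $T$ more than $T^\varepsilon$ points of $Y\cap\Lambda$ of height $\ll T^{O(1)}$ off the semialgebraic locus: a contradiction.

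The crux, and the reason the method does not yet reach the conjecture in general, is exactly this clash of estimates. The upper bound comes cleanly from non-archimedean o-minimality, and the functional-transcendence input is available in enough cases to dispose of the positive-dimensional components; but the matching \emph{lower} bound --- a uniform estimate relating the height of an arbitrary codimension-$\ge d+1$ subgroup of $G$ to the size of its Galois orbit --- is the deep arithmetic ingredient, the semiabelian counterpart of the bounds underlying the Pila--Zannier proof of Manin--Mumford and Habegger's work on subvarieties of tori. It is known unconditionally only in restricted situations; the case of a curve in a torus, where it reduces to classical height estimates, is precisely what the argument of the next section carries through, yielding the Bombieri--Masser--Zannier theorem.
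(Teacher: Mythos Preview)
The statement you have attempted is Conjecture~\ref{conj:pink} (Pink's conjecture), which the paper states as an \emph{open conjecture}, not a theorem; there is no proof of it in the paper to compare against. The paper's actual result is Theorem~\ref{thm:main}, the special case of a curve in $\mathbb{G}_m^3$, and the paper is explicit that the general semiabelian Zilber--Pink conjecture remains wide open.

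Your write-up is candid about this: you say in the final paragraph that ``the method does not yet reach the conjecture in general'' because the Galois-orbit lower bound is missing. So what you have produced is not a proof but a programme, and it is worth being precise about where it diverges from what the paper actually does in the case it \emph{can} handle. The paper's proof of Theorem~\ref{thm:main} does not use a $p$-adic Pila--Wilkie count against a Galois lower bound. Instead it (i) bounds the ramification of anomalous points via Newton polygons of logarithms (the Coleman step, Proposition~\ref{prop:ram}), (ii) bounds the residue discs containing unramified anomalous points via a Voloch-style argument on $W_1(\overline{\F}_p)$ (the Buium step, Proposition~\ref{prop:voloch}), and then (iii) on each remaining disc uses Ax--Schanuel on $X^2$ to force all anomalous points into a fixed number field, after which Siegel's theorem finishes. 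No point-counting theorem and no height lower bound are invoked; the arithmetic input is Siegel, not a Galois-orbit estimate.

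So your sketch is a reasonable description of a Pila--Zannier-style strategy transported to the $p$-adic world, but it is not a proof of the conjecture, and it is also not the method the paper uses for the case it proves. If your intent was to prove Theorem~\ref{thm:main}, the gap is that you have replaced the paper's concrete ramification and residue-disc finiteness arguments with an appeal to non-archimedean o-minimality plus an unproved lower bound; if your intent was genuinely to prove Conjecture~\ref{conj:pink}, then as you yourself note, the lower bound is the missing idea and the argument is incomplete.
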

\begin{remark}
The above conjecture is not what is usually referred to as the Zilber--Pink conjecture. However (although the Zilber--Pink conjecture is still wide open) it has been proved by Barroero and Dill \cite{BD25} that the conjecture above is \textit{equivalent} to the usual form of the Zilber--Pink conjecture. We refer the reader to \cite{pila} for a survey of the Zilber--Pink conjecture and known results.
\end{remark}
The relation between Mordell--Lang and Zilber--Pink comes from the apparently unpromising observation that, to show that a subset $T$ is not dense in a topological space $S$, it is enough to show that $T^n$ is not dense in $S^n$. A rational point in $X$ will not typically lie in a proper abelian subvariety of its Jacobian $J$, and hence the subset $X(\Q )\subset J(\mathbb{C})$ is a priori not something that can be attacked using Zilber--Pink. However, if $\rk J(\Q )=r$ and $n>0$ then every point of $X(\Q )^{r+n}$ lies on a codimension $ng$ abelian subvariety of $J(\C )^{r+n}$, and hence, once $n>\frac{r}{g-1}$, is contained in the unlikely intersection $X^{r+n}(\mathbb{C})\cap (J(\mathbb{C})^{r+n} )^{[ng]}$ (here for simplicity we assume that $X$ is furthermore not contained in a translate of a proper abelian subvariety of $G$, so that $X^{r+n}$ generates $J^{r+n}$).

Although this application of the Zilber--Pink conjecture proves a result which is already known, there are other cases of the Zilber--Pink conjecture which imply Diophantine results currently out of reach. For example by Stoll \cite{stoll:uniform}, Zilber--Pink implies results on the non-Zariski density of the set of `low-rank' points on $X\times _S \ldots \times _S X$, where $X\to S$ is a family of subvarieties of abelian varieties (see \cite{D22} for more discussion of this example).

For applications to rational points, it is evidently overkill to work over $\mathbb{C}$ in conjecture \ref{conj:pink}. However one could clearly formulate the same conjecture over any subfield $K$ of $\mathbb{C}$: one can either define a subset $G^{[m]}$ of $G(K)$ as above, or choose an embedding into $\mathbb{C}$ and conjecture that $X(K)\cap G^{[d+1]}$ is not Zariski dense in $X_K$. We are particularly interested in examining this conjecture over a field $K$ which is an extension of $\Q _p $. It turns out to be useful to separate into three different contexts: 
\begin{enumerate}
\item $K$ is a finite extension of $\Q _p $. 
\item $K$ is a finite extension of $\Q _p ^{\nr }$ (the maximal unramified extension of $\Q _p $).
\item $K=\overline{\Q }_p $. 
\end{enumerate}
It is straightforward to show that proving conjecture \ref{conj:pink} over $\overline{\Q }_p$ is equivalent to proving it over $\mathbb{C}$. The Buium--Coleman method may be described as being a strategy to prove cases of Zilber--Pink as follows: first, reduce case (3) to (2) using Coleman's method for bounding ramification of common zeroes of $p$-adic abelian integrals \cite{coleman:ramified}.  Second, use Buium's theory of $\delta $-geometry and $\delta $-characters to reduce case (2) to case (1) \cite{buium:96,buium:95,buium:97}. Finally, solve case (1) using a version of the Chabauty--Coleman method \cite{coleman:chabauty}. This is roughly the strategy of the Buium--Coleman proof of Manin--Mumford for curves inside their Jacobian, and has recently been employed by the author, with Pandit, to give a new proof of the Mordell--Lang conjecture for curves, which is quantitative when the rank is less than the genus \cite{DP}. In the case when the rank is greater than the genus, the Chabauty--Coleman method is not available, and the Buium--Coleman method instead reduces the Mordell--Lang conjecture to the Mordell--Lang conjecture (such a reduction was obtained by somewhat different methods by Raynaud \cite{Ray83}).

As an illustration of this circle of ideas, in this article we give a `Buium--Coleman' proof of the following theorem first proved by Bombieri, Masser and Zannier \cite{BMZ99}.
\begin{theorem}\label{thm:main}
Let $X$ be a curve over a number field $K$ and $f_1 ,f_2 ,f_3 \in K(X)^\times $ be functions which are linearly independent in $(K(X)^\times /K^\times )\otimes \Q $. Then the set of points $x\in X(\overline{K})$ such that $(f_i (x))$ lies in a rank one subgroup of $\mathbb{G}_m ^3 $ is finite.
\end{theorem}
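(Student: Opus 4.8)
The plan is to run the Buium--Coleman strategy for the morphism $f=(f_1,f_2,f_3)$ from (an open subcurve of) $X$ into $\mathbb{G}_m^3$. I would first replace $X$ by its smooth projective model, remove the finitely many zeros and poles of the $f_i$ (the set $\Sigma$ of the theorem is automatically disjoint from them, since $x\in\Sigma$ forces $f_i(x)\in\mathbb{G}_m$), and spread $X$ and the $f_i$ out over a ring of $S$-integers of $K$. The hypothesis that $f_1,f_2,f_3$ are independent in $K(X)^\times/K^\times$ is equivalent to saying that $f(X)\subseteq\mathbb{G}_m^3$ is contained in no translate of a proper subtorus, hence generates $\mathbb{G}_m^3$, and also to saying that the third-kind differentials $\omega_i:=df_i/f_i$ on $X$ are $\Q$-linearly independent (a relation $\sum a_i\omega_i=0$ means $\prod f_i^{a_i}$ is constant). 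A point $x$ lies in $\Sigma$ exactly when $f(x)$ lies in a codimension-$\geq 2$ algebraic subgroup of $\mathbb{G}_m^3$; up to a torsion translation such a subgroup is a subtorus of dimension $\leq 1$, i.e. the image $T_{\vec m}$ of a primitive cocharacter $\vec m=(m_1,m_2,m_3)\in\Z^3$ (allowing $\vec m=0$). Thus for $x\in\Sigma$ we may write $f_i(x)=\zeta_i\,t^{m_i}$ with $\zeta_i$ a root of unity and $t\in\overline{\Q}_p^\times$. Since all torsion of $\mathbb{G}_m^3$ lies in $\overline{\Q}\subseteq\overline{\Q}_p$ and $\overline K$ embeds in $\overline{\Q}_p$, it suffices to fix a prime $p$ of good reduction and bound $\Sigma$ inside $X(\overline{\Q}_p)$.

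The heart of the argument is to produce one nonzero locally analytic function on $X$ vanishing on all of $\Sigma$. Working over a finite extension of $\Q_p^{\nr}$, Buium's theory provides an order-one $\delta$-character $\psi\colon\mathbb{G}_m\to\mathbb{G}_a$, a homomorphism which kills all torsion and which is \emph{not} proportional to the $p$-adic logarithm. Set $\lambda:=(\log f_1,\log f_2,\log f_3)$ and $\Psi:=(\psi\circ f_1,\psi\circ f_2,\psi\circ f_3)$, two $\overline{\Q}_p^{\,3}$-valued maps on $X$, locally analytic and locally $\delta$-analytic respectively. For $x\in\Sigma$, the relations $f_i(x)=\zeta_i t^{m_i}$ together with additivity of $\log$ and $\psi$ and their vanishing on roots of unity give $\lambda(x)=(\log t)\,\vec m$ and $\Psi(x)=\psi(t)\,\vec m$: both are proportional to the \emph{same} primitive vector $\vec m=\vec m(x)$. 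Hence the wedge $\Theta:=\lambda\wedge\Psi$, a triple of $\delta$-analytic functions on $X$, vanishes identically on $\Sigma$. One then checks $\Theta\not\equiv 0$: if $\lambda$ and $\Psi$ were everywhere proportional, then differentiating shows the vectors of differentials $(\omega_i)_i$ and $(d(\psi\circ f_i))_i$ are pointwise proportional, which forces the rational function $\omega_1/\omega_2$ to be fixed by Frobenius, hence constant, contradicting the linear independence of the $\omega_i$. A $\delta$-geometry argument --- finiteness of the relevant zero locus of a nonzero $\delta$-analytic function on a curve over a finite extension of $\Q_p^{\nr}$ --- then bounds $\Sigma\cap X(L)$ for each such $L$.

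Finally, to pass from points over finite extensions of $\Q_p^{\nr}$ (case (2)) to all of $X(\overline{\Q}_p)$ (case (3)), I would invoke Coleman's method for bounding the ramification of common zeros of $p$-adic abelian integrals: applied to the integrals $\log f_i=\int\omega_i$ together with the constraint that the zero is a torsion translate of a point of a subtorus, this bounds the ramification index over $\Q_p$ of any $x\in\Sigma$, so that $\Sigma\subseteq X(L_0)$ for a single fixed finite extension $L_0/\Q_p$, reducing case (3) to the previous step. I expect the main obstacle to be precisely this ramification bound: the torsion factors $\zeta_i$ in $f_i(x)=\zeta_i t^{m_i}$ may involve $p$-power roots of unity, which the $\delta$-character over an unramified base does not detect, so controlling the fields of definition of points of $\Sigma$ --- equivalently, ruling out a sequence of points whose subtori or torsion translates grow arbitrarily ramified --- is the delicate point, and it is here that Coleman's ramification estimates must be imported rather than soft $p$-adic analysis. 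A secondary technical issue is that over a ramified base the Frobenius lift defining $\psi$ is not canonical, so a little care is needed to make $\Theta$ and the $\delta$-finiteness statement well posed.
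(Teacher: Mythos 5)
Your opening moves are in the right spirit and do track the paper's first two steps: the Newton--polygon/ramification control is the paper's Coleman step (Proposition \ref{prop:ram}, Lemma \ref{lemma:fin}), and the use of a Frobenius-twisted character of $\mathbb{G}_m$ to handle infinitely many residue discs is the paper's Buium/Voloch step (Proposition \ref{prop:voloch}). Your observation that an anomalous point $x$ has both $(\log f_i(x))_i$ and $(\psi(f_i(x)))_i$ proportional to the same integer vector is also correct. The gap is the concluding step. There is no theorem of the form ``a nonzero $\delta$-analytic function on a curve has finitely many zeroes over (finite extensions of) $\Q_p^{\nr}$'': the $\delta$-character $\psi$ of $\mathbb{G}_m$ itself vanishes at every prime-to-$p$ root of unity, hence on infinitely many unramified points. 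More seriously, your locus $\lambda\wedge\Psi=0$ only records proportionality of the two vectors over $\mathbb{C}_p$, not proportionality to a \emph{common integer} vector, so it is a much larger set than $\Sigma$ and there is no reason for it to be finite, even inside a single residue disc; the paper never proves (and does not need) per-disc finiteness of the anomalous locus over $\overline{\Q}_p$ --- its $p$-adic arguments only bound the set of residue discs meeting it. The introduction flags exactly this obstruction: $(\mathbb{G}_m^3)^{[2]}(\Z_p)$ is dense in $(\Z_p^\times)^3$, so pure $p$-adic zero-counting cannot close the argument; the $p$-adic input can only reduce the problem over $\overline{\Q}$ to a problem over a fixed number field.

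What is missing from your proposal is precisely that arithmetic endgame. After the two finiteness-of-discs statements, the paper fixes a disc, uses the ramification bound $\le 2g+d$ of Proposition \ref{prop:ram} (not unramifiedness!) to replace the open disc by a closed polydisc, and then applies Ax--Schanuel for the formal torus (Theorem \ref{thm:AS}, via Proposition \ref{prop:AStoC}) on $X\times X$ to pairs $(x,\sigma(x))$, $\sigma\in\Gal(\overline L|L)$: the positive-dimensional components of the relevant Zariski closure lie in finitely many explicit translates $x_j^{a_i}=\lambda_i y_j^{b_i}$, and a case analysis (reduction to Manin--Mumford for curves in $\mathbb{G}_m^2$ when $a_i\neq b_i$, plus a minimality argument when $a_i=b_i$) shows all algebraic anomalous points are defined over one fixed finite extension $L/K$. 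Lemma \ref{lemma:integral} then makes them integral at almost all places, and finiteness comes from Siegel's theorem --- an ingredient with no counterpart in your sketch. Finally, your plan for ramified points is not workable as stated: you propose to rerun the $\delta$-character argument over a fixed ramified extension $L_0$, but $\delta$-characters require a Frobenius lift on the base, which does not exist over ramified rings (this is why Proposition \ref{prop:voloch} is stated for $\Q_p^{\nr}$-points only); the paper instead disposes of ramified anomalous points entirely by the Newton-polygon argument, and you would also need the projection trick of Lemma \ref{lemma:integral} to handle points reducing into the divisors of the $f_i$, where $\log f_i$ is not controlled by an integral parameter.
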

In the language of Zilber--Pink, this is saying that $X(\overline{K})\cap (\mathbb{G}_m ^3 )^{[2]}$ is finite. In fact Bombieri, Masser and Zannier proved this more generally for a curve $C$ inside $\mathbb{G}_m ^n $ when $n\geq 2$. An alternative proof of the theorem was given by Capuano, Masser, Pila and Zannier \cite{CMPZ} using the Pila--Zannier strategy \cite{PZ}.
This theorem was proved by Maurin \cite{maurin} with the weaker assumption that the $f_i$ are linearly independent in $K(X)^\times $. We refer the reader to \cite{Zann} for a survey of unlikely intersection problems in tori.

\subsection*{Acknowledgements}
I am very grateful to Gabriel Dill and the anonymous referees for their corrections. The ideas in this paper originated from discussions with Sudip Pandit and Arnab Saha. I would also like to thank Chris Daw, Martin Orr, Zerui Tan and David Urbanik for teaching me about unlikely intersections. This research was support by Royal Society University Research Fellowship URF$\backslash $R1$\backslash $201215.

\section{$p$-adic integrals and the Buium--Coleman approach to Manin--Mumford}
There are multiple proofs of the Manin--Mumford conjecture (see \cite{tzermias} for a survey). In this section we want to explain the Buium--Coleman approach via $p$-adic integration \cite{buium:96}, \cite{coleman:ramified}. We will deviate very slightly from the original proofs. We refer to \cite{FvdP} for background on rigid analysis. Let $X$ be a smooth projective variety over $\overline{\Z }_p $. Let $X^{\an }$ denote the rigid analytification of $X_{\overline{\Q }_p }$. If $\overline{z}$ is an $\overline{\F }_p$ point of $X$, we denote by $]\overline{z}[$ the tube of $\{ \overline{z} \}$ in the sense of Berthelot \cite{LS}. This is a rigid analytic space, isomorphic to an open polydisk, whose $\mathbb{C}_p $ points are the preimage of $\overline{z}$ under the reduction map
\[
X^{\mathrm{an}}(\mathbb{C}_p )\to X(\overline{\F }_p ).
\]

There are several important features of open polydiscs $B$ it will be useful to highlight:
\begin{itemize}
\item one can integrate on $B$: for every closed one-form $\omega $ in $\mathcal{O}(B)$ there is a $g$ in $\mathcal{O}(B)$ such that $dg=\omega $. By contrast if $D$ is a closed polydisc then $\Omega (D)/d\mathcal{O}(D)$ is infinite dimensional.
\item The open polydisc is non-Noetherian. For example, in one dimension rigid analytic functions on it can have infinitely many zeroes. In higher dimensions, the common zeroes of a set of rigid analytic functions on an open polydisc will be a rigid analytic space with (potentially) infinitely many irreducible components.
\item For any \textit{finitely ramified} extension $K$ of $\Q _p $, the $K$ points of an open polydisc will be contained in a closed disc. Hence, for example, in one dimension a rigid analytic function on an open disc can have only finitely many zeroes in $K$. This feature will be discussed more in section \ref{sec:C}.
\end{itemize}

The $p$-adic logarithm
\[
\log :\overline{\Z }_p ^\times \to \overline{\Q }_p 
\]
is the unique homomorphism which is given, on the residue disc $]1[$, by the rigid analytic function
\[
1+T\mapsto \sum (-1)^{i+1}\frac{T^i}{i}.
\]

Similarly if $A/\overline{\Z }_p$ is a semiabelian variety, and $\omega $ is a global differential on $A$, then $\int _0 \omega $ can be defined as the unique homomorphism
\[
A(\overline{\Z }_p )\to \overline{\Q }_p .
\]
which agrees with the formal integral of $\omega $ on the residue disc at $0$ \cite{coleman:torsion} \cite{zarhin}. In particular, for all torsion points $P$ on $A$,
\[
\int ^P _0 \omega =0.
\]
We will refer to this as the \textit{Coleman integral} of $\omega $. Note that the function $\log $ above is a special case of this construction when $A=\mathbb{G}_m$ and $\omega =\frac{dT}{T}$. If $X$ is a curve mapped into its Jacobian via sending a point $b$ to the origin, then by functoriality we deduce that $X(\overline{\Q }_p )\cap J_{\mathrm{tors}}$ is contained in the common zeroes of $\int _b \omega $ for $\omega \in H^0 (X,\Omega )$.
The Buium--Coleman approach to the Manin--Mumford conjecture for a curve $X$ is essentially to show that the common zeroes of $\int _b \omega$,
for $\omega $ in $H^0 (X,\Omega )$, are finite.

If $\int _b \omega $ were a rigid analytic function on a one dimensional Noetherian rigid analytic space, then we could conclude by observing that it has only finitely many zeroes. However, this is not the case: it is only locally analytic, i.e.  the restriction of $\int _b \omega $ to each residue disc is a rigid analytic function with differential $\omega $. The residue discs are not Noetherian rigid analytic spaces, and in fact $\int _b \omega $ can have infinitely many zeroes. There are two essentially distinct ways that this happens.

\begin{enumerate}
\item Since the open discs $]\overline{z}[$,  for $\overline{z}\in X(\overline{\F }_p )$ are non-Noetherian, on each residue disk the rigid analytic function $(\int _b \omega )|_{]\overline{z}[}$ can have infinitely many zeroes. In fact one can show that they do have infinitely many zeroes (see Coleman \cite{coleman:ramified}). Note that this is not an issue if one is only interested in $\Q _p ^{\mathrm{nr}}$-points (or more generally points of bounded ramification), because the $\Q _p ^{\mathrm{nr}}$ points all lie on the closed disc of radius $\frac{1}{p}$ inside $]\overline{z}[$. 
\item Even restricting to $\Q _p^{\mathrm{nr}}$-points, $\int _b \omega $ is only rigid analytic when restricted to a residue disc $]\overline{z}[$. Since there are infinitely many residue discs, any argument which only uses the fact that the functions $\int _b \omega $ are locally analytic cannot prove Manin--Mumford, and one needs a global way to understand their behaviour.
\end{enumerate}
An archetypal example of this phenomenon is the logarithm map $\log $ on $\mathbb{G}_m$. The zeroes of $\log $ are exactly the roots of unity in $\overline{\Q }_p $. On each residue disc, $\log $ has infinitely many zeroes, but only finitely many of bounded ramification. On the other hand on the whole of $\mathbb{G}_m $ it also has infinitely many unramified zeroes. The first issue is about ramification, and is dealt with using the notion of the \textit{Coleman expansion}. The second issue can be dealt with using Buium's theory of delta characters \cite{buium:95} \cite{buium:96}, which replaces locally analytic functions on a variety with (global) rigid analytic functions on a larger dimensional space.

\section{The Coleman step: bounding ramification of anomalous points}\label{sec:C}
In this section we will give some of the ideas behind Coleman's result \cite{coleman:ramified} on ramification of torsion points, in a substantially simpler setting. The basic idea is to control the ramification of zeroes of Coleman integrals by finding a geometric interpretation of their valuations. Here $\val :\C _p \to \R \cup \{ \infty \}$ is the valuation normalised so that $\val (p)=1$, and the valuations of the zeroes are their valuations with respect to a suitably chosen integral parameter. Studying ramified points can be reduced to studying points of non-integral valuation, by the following elementary observation.
\begin{lemma}
Let $X$ be a curve over $\Z _p ^{\mathrm{nr}}$. A point $z\in X(\overline{\Z }_p )$ is ramified if and only if there is a point $z_0 \in X(\Z _p ^{\mathrm{nr}})$, and an integral parameter $T$ at $z_0 $, such that $\val (T(z))$ is not in $\Z $.
\end{lemma}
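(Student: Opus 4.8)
The plan is to prove both directions by relating ramification of the residue field of $z$ to the valuation of a parameter, using the fact that over $\Z_p^{\nr}$ a point reduces to some $\overline{\F}_p$-point and its residue disc is a polydisc on which integral coordinates are available. First I would set up notation: let $z \in X(\overline{\Z}_p)$ reduce to a point $\overline{z} \in X(\overline{\F}_p)$. Since $X$ is a curve over $\Z_p^{\nr}$ and $\overline{\F}_p$ is algebraically closed, $\overline{z}$ lifts to a point $z_0 \in X(\Z_p^{\nr})$, and after possibly passing to smooth locus one may choose an integral parameter $T$ at $z_0$, i.e. a local coordinate $T \in \mathcal{O}_{X,z_0}$ generating the maximal ideal, so that $T$ induces an isomorphism of the residue disc $]\overline{z}[$ with an open unit polydisk (here $1$-dimensional) sending $z_0 \mapsto 0$. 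Under this identification $z$ corresponds to the element $t := T(z) \in \C_p$ with $\val(t) > 0$.

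For the ``if'' direction, suppose $\val(t) \notin \Z$. Then $\Q_p^{\nr}(t)$ is a ramified extension of $\Q_p^{\nr}$, because the valuation group of $\Q_p^{\nr}$ is $\Z$ while $\val(t)$ lies outside it; hence $\Q_p^{\nr}(z) \supseteq \Q_p^{\nr}(t)$ is ramified, so $z$ is a ramified point. For the ``only if'' direction I would argue contrapositively: suppose that for \emph{every} choice of $z_0 \in X(\Z_p^{\nr})$ and every integral parameter $T$ at $z_0$ we have $\val(T(z)) \in \Z$. Take $z_0$ reducing to the same point as $z$ and a fixed integral parameter $T$; then $t = T(z)$ has $\val(t) = n \in \Z_{>0}$. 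Choosing a uniformizer $\varpi$ of $\Z_p^{\nr}$ (which may be taken to be $p$ since $\Z_p^{\nr}$ is unramified over $\Z_p$), the element $t/p^n$ is a unit; the point is that one can then adjust the parameter. Concretely, replace $z_0$ by a nearby point $z_0' \in X(\Z_p^{\nr})$ in the same residue disc with $T(z_0') \equiv t \pmod{p^{n+1}}$ — such a point exists because $\Z_p^{\nr}$ is dense in the closed disc of radius $1/p$ and $\val(t) = n \geq 1$ — so that $T(z) - T(z_0')$ has valuation $\geq n+1$. Iterating, or rather taking the limit, produces $z_0' \in X(\Z_p^{\nr})$ with $T(z) = T(z_0')$, forcing $z = z_0' \in X(\Z_p^{\nr})$, which is unramified.

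The main obstacle I anticipate is the ``only if'' direction: showing that \emph{if no integral parameter detects non-integral valuation then $z$ is actually unramified}, which really amounts to the approximation/convergence argument that successively improves $z_0'$ until $z_0' = z$. One must be careful that the successive corrections stay inside the same residue disc and that the parameter remains integral at each corrected point (equivalently, that the reduction of $z_0'$ is unchanged, which holds since all corrections have positive valuation). A clean way to package this is: the condition ``$\val(T(z)) \in \Z$ for all $z_0, T$'' is equivalent to ``$\val(T(z) - a) \in \Z$ for all $a \in \varpi\Z_p^{\nr}$'', and taking $a$ to be successive truncations of $t$ shows the valuations $\val(t - a)$ form an unbounded-below-no, unbounded \emph{increasing} sequence of integers, forcing $t \in \Z_p^{\nr}$; equivalently the only way every such difference has integral valuation is $t = a$ for some $a$, i.e. $t \in \varpi \Z_p^{\nr}$. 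Then $z$ lies in $X(\Z_p^{\nr})$. This reduces the whole lemma to the completeness of $\Z_p^{\nr}$ and the elementary fact that $\val(\Q_p^{\nr \, \times}) = \Z$, with the geometric content entirely carried by the existence of integral parameters on residue discs of a smooth curve.
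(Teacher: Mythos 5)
Your ``if'' direction is fine, and the overall shape of the ``only if'' direction (successively improving an unramified approximation to $t=T(z)$, using that the residue field of $\Z_p^{\nr}$ is all of $\overline{\F}_p$) is the right idea; but the final step contains a genuine gap. Two of the facts you lean on are false as stated: $\Z_p^{\nr}$ is \emph{not} $p$-adically complete (its completion is $W(\overline{\F}_p)$, which is much bigger), and $\Z_p^{\nr}$ is \emph{not} dense in the closed disc of radius $1/p$ (no element of $\Z_p^{\nr}$ is within distance $p^{-3/2}$ of $p^{3/2}$). The density claim is repairable inside your contrapositive: if $\val(t-a)=m\in\Z$, lift the residue of $(t-a)/p^m$ to a unit $u\in\Z_p^{\nr}$ to get $\val(t-a-p^mu)>m$, and the standing hypothesis upgrades this to $\geq m+1$. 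The completeness claim is not: your iteration only shows that $t$ lies in the closure of $\Z_p^{\nr}$, i.e.\ in $W(\overline{\F}_p)$, and from that alone you cannot conclude $t\in\Z_p^{\nr}$ or $z\in X(\Z_p^{\nr})$. Note that your argument never uses that $z$ is algebraic (i.e.\ $z\in X(\overline{\Z}_p)$ rather than $X(W(\overline{\F}_p))$); for a transcendental $t\in pW(\overline{\F}_p)$ the hypothesis ``$\val(t-a)\in\Z$ for all $a\in p\Z_p^{\nr}$'' holds, so no argument that ignores algebraicity can close this gap. This is precisely the content of the standard fact that $\Q_p^{\nr}$ is algebraically closed in its completion (Ax--Sen--Tate/Krasner), which must be invoked or reproved.

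A short fix: if $t\notin\Q_p^{\nr}$, choose a conjugate $t'\neq t$ of $t$ over $\Q_p^{\nr}$ (possible in characteristic $0$). Since the valuation on $\overline{\Q}_p$ is Galois-invariant over the henselian field $\Q_p^{\nr}$, one has $\val(t-a)=\val(t'-a)$ for every $a\in\Q_p^{\nr}$, hence $\val(t-a)\leq\val(t-t')<\infty$ for all $a$. So the strictly increasing sequence of integer valuations your iteration would produce is impossible, and at some finite stage one obtains $a\in p\Z_p^{\nr}$ with $\val(t-a)\notin\Z$; then $T-a$ is an integral parameter at the point $z_0'\in X(\Z_p^{\nr})$ with $T(z_0')=a$, which is exactly what the lemma asks for. (The remaining case $t\in p\Z_p^{\nr}$ gives $z=z_0'$ by injectivity of $T$ on the residue disc, as you note, so $z$ is unramified.) For comparison, the paper does not give an internal argument at all: it simply cites \cite[3.2]{DP}, where the same circle of ideas (integral parameters plus control of ramification via valuations) is carried out.
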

\begin{proof}
See e.g. \cite[3.2]{DP}.
\end{proof}

Given $F=\sum a_n t^n \in \C _p [\! [t]\! ]$, the Newton polygon $\mathrm{NP}(F)$ is the highest convex polygon lying above the points $(n,\val (a_n ))$. We say that $\lambda $ is a negative slope of $F$ if there is a line segment in $\NP (F)$ of gradient $-\lambda $. The \textit{length} of the slope is the length of the projection of the corresponding line segment onto the $x$-axis. The relation with zeroes of $F$ is the following (see e.g. \cite[IV.4]{koblitz}).
\begin{proposition}
The valuations of the zeroes of $F$ are the negative slopes of $\mathrm{NP}(F)$. Moreover the number of zeroes within given a valuation is equal to the length of the corresponding slope.
\end{proposition}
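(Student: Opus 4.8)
The plan is to reduce to polynomials, where the statement is pure algebra powered by the ultrametric inequality, and then bootstrap to power series by truncation together with a Weierstrass-type factorisation.

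\textbf{Polynomials.} After scaling we may take $a_0=1$; if $a_0=0$ write $F=t^m G$ with $G(0)\neq 0$, which shifts $\NP(F)$ to the right by $m$ and only introduces the zero $t=0$ (not a negative slope), so we may assume $a_0\neq 0$. As $\C_p$ is algebraically closed, $F=\prod_{i=1}^n(1-\beta_i t)$ where the $\beta_i=\alpha_i^{-1}$ are the reciprocals of the zeros $\alpha_i$ of $F$, so $a_k=(-1)^k e_k(\beta_1,\dots,\beta_n)$ with $e_k$ the $k$-th elementary symmetric polynomial. Order the zeros so that $\val(\beta_1)\le\cdots\le\val(\beta_n)$. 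The ultrametric inequality gives $\val(a_k)\ge\val(\beta_1)+\cdots+\val(\beta_k)$ for all $k$, with equality whenever $\val(\beta_k)<\val(\beta_{k+1})$, since then $\beta_1\cdots\beta_k$ is the unique term of least valuation in $e_k$ (and $\val(a_n)=\sum_i\val(\beta_i)$ likewise, $e_n$ being a monomial). Now $k\mapsto\sum_{i\le k}\val(\beta_i)$ is convex and lies on or below the points $(k,\val(a_k))$, hence on or below $\NP(F)$ (the largest convex minorant of those points); as the two agree at $k=0$, at $k=n$, and at every $k$ with $\val(\beta_k)<\val(\beta_{k+1})$, and the lower function is linear between consecutive such indices, convexity of $\NP(F)$ forces equality everywhere. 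Thus the segments of $\NP(F)$ have gradients $\val(\beta_i)$, a gradient $\mu$ occurring with horizontal length $\#\{i:\val(\beta_i)=\mu\}$; writing $\mu=-\val(\alpha_i)$, the corresponding negative slope $-\mu=\val(\alpha_i)$ is the common valuation of exactly that many zeros. This is the claim for polynomials.

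\textbf{Power series.} Suppose $F=\sum a_n t^n$ converges on a disc $|t|\le|\rho|$, i.e. $\val(a_n)+n\,\val(\rho)\to\infty$. Convergence forces all but finitely many slopes of $\NP(F)$ to exceed $-\val(\rho)$, so there is a vertex $(N,\val(a_N))$ of $\NP(F)$ past which every gradient is $>-\val(\rho)$, and then the truncation $F_N=\sum_{n\le N}a_n t^n$ satisfies $\NP(F_N)=\NP(F)$ on $[0,N]$. Choosing $\rho$ so that $-\val(\rho)$ is not a slope, on the circle $|t|=|\rho|$ one has $|F_N|$ equal to its Gauss norm while $|F-F_N|$ is strictly smaller, so a $p$-adic Rouch\'e argument (or: apply the $p$-adic Weierstrass preparation theorem to write $F=F_N\cdot u$ with $u$ a unit of flat Newton polygon on the disc, then use that Newton polygons are additive under products) shows that $F$ and the polynomial $F_N$ have exactly the same zeros, with multiplicity, in $|t|\le|\rho|$. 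Hence the negative slopes of $\NP(F)$ that are $\ge\val(\rho)$, counted with length, coincide with those of $F_N$, which by the polynomial case are the valuations of the zeros of $F_N$ --- and so of $F$ --- in that disc; letting $\rho$ exhaust the domain of convergence gives the proposition.

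The symmetric-function computation is routine; the genuine technical point is the bootstrap --- justifying that the zeros of $F$ in a disc are captured by a polynomial truncation and that Newton polygons multiply. Granting that input (via Weierstrass preparation, $p$-adic Rouch\'e, or the successive-approximation construction of roots as in \cite{koblitz}), the counting of slopes is immediate.
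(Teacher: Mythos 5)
The paper does not actually prove this proposition --- it is quoted as a standard fact with a pointer to \cite[IV.4]{koblitz} --- and your argument (elementary symmetric functions and the ultrametric inequality for polynomials, then truncation plus Weierstrass preparation or a $p$-adic Rouch\'e argument to pass to convergent power series) is exactly that standard textbook proof, and it is correct. The only point to watch, which the statement as given in the paper also elides, is the boundary case where $\NP(F)$ has a final infinite segment of slope exactly $-\val(\rho)$ at the edge of the region of convergence (there your claim that all but finitely many gradients exceed $-\val(\rho)$ fails and zeroes of that exact valuation need separate care); since the paper only applies the proposition to zeroes strictly inside the disc of convergence, e.g.\ for $\log(1+t)$, this caveat is harmless.
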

For example, the function $\log (1+t)$ is easily seen to have a Newton polygon with end points $(p^i ,-i)$ for $i\in \Z _{\geq 0}$. Hence the negative slopes are $\frac{1}{p^i-p^{i-1}}$ for $i>0$, and their lengths are $p^i -p^{i-1}$. This corresponds to the fact that $\val (\zeta _{p^i}-1)=\frac{1}{p^i-p^{i-1}}$.

Let 
\[
X\stackrel{(f_1 ,\ldots ,f_r )}{\longrightarrow }\mathbb{G}_m ^r
\]
be an irreducible closed curve over $\mathbb{Z}_p ^{\mathrm{nr}}$, whose normalisation $\widetilde{X}\to X$ has projective closure $C$ of genus $g$, with $\widetilde{X}\subset C$ being the complement of $n$ points. Let $f=\prod f_i ^{\lambda _i }$ for some $\lambda _i \in \Z $, so that $\log (f)=\sum \lambda _i \log (f_i )$. Given $z\in X(\overline{\Z }_p )$, choose $z_0 \in X(\Z _p ^{\mathrm{nr}})$ with the same image $\overline{z}$ in $X(\overline{\F }_p )$. Then
\[
\log (f)(z)=\log (f)(z_0 )+\int ^{z}_{z_0 }\omega ,
\]
where $\omega =\sum \lambda _i df_i /f_i $. The zeroes of $\log (f)$ on the disc at $z$ admit a simple description in terms of the reduction modulo $p$ of the differentials $df_i /f_i $. This follows from writing
\begin{align}\label{eqn:slp}
\log (f)(z) & =\log (f(z_0 ))+\log (f(z)/f(z_0 ))  \nonumber \\
& = \log (f(z_0 ))+\sum _{m>0}(-1)^{m+1}\frac{(f(z)-f(z_0 ))^m }{mf(z_0 )^m }
\end{align}
\begin{lemma}\label{lemma:slop}
Let $k =\ord _{\overline{z}}(\frac{df}{f})+1.$ Suppose $k<p$.
\begin{enumerate}
\item If $\val (\log (f(z_0 )))>0$, then the negative slopes $\lambda$ of the Newton polygon of $\log (f)$ to the right of $k$ are of the form $\frac{1}{k(p^n-p^{n-1})}$, for $n>0$. The corresponding end vertices are $(kp^n,-n)$ for $n\geq 0$.
\item If $\val (\log (f(z_0 )))=v\leq 0$, the negative slopes $\lambda $ of the Newton polygon of $\log (f)$ are $\frac{1}{k p^{v+1}}$ and $\frac{1}{k(p^{n+1}-p^{n})}$ for $n>v$. The corresponding end vertices are $(0,0)$ and $(kp^n ,-n)$ for $n>v$.
\item If $\log (f)$ contains a non-integral negative slope $>\frac{1}{p-1}$, then $\frac{df}{f}$ vanishes at $\overline{z}$ (i.e. $k>1$).
\end{enumerate}
\end{lemma}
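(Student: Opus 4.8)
The plan is to work throughout in a fixed integral parameter $T$ at $z_0$ and to reduce all three parts to a single Newton polygon computation. On $]z_0[$ write $\omega=\frac{df}{f}=h(T)\,dT$ with $h(T)=\sum_{m\ge 0}c_mT^m$; since $X$, the $f_i$ and $T$ are all defined over $\Z_p^{\nr}$ the coefficients $c_m$ lie in $\Z_p^{\nr}$, so each $\val(c_m)$ is a non-negative integer and each $c_m$ has a reduction $\overline{c}_m\in\overline{\F}_p$. The displayed expansion then gives, as a power series in $T$, $\log(f)=\log(f(z_0))+\sum_{\ell\ge 1}\frac{c_{\ell-1}}{\ell}T^\ell$, so the coefficient of $T^\ell$ for $\ell\ge 1$ has valuation $\val(c_{\ell-1})-\val(\ell)$. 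The hypothesis $k=\ord_{\overline{z}}(\omega)+1<p$ says precisely that $\overline{c}_m=0$ for $m<k-1$ and $\overline{c}_{k-1}\ne 0$, so all that remains is to compute the Newton polygon of a power series whose coefficient valuations are integers, given these vanishing conditions and the constraint $k<p$.

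The key input is that $\overline{\omega}=\overline{df/f}=d\overline{f}/\overline{f}$ is a logarithmic differential on the (smooth) special fibre, regular at $\overline{z}$, hence fixed by the Cartier operator $\mathcal{C}$. Using $\mathcal{C}(a^p\eta)=a\,\mathcal{C}(\eta)$ together with $\mathcal{C}(T^m\,dT)=T^{(m+1)/p-1}\,dT$ when $p\mid m+1$ and $\mathcal{C}(T^m\,dT)=0$ otherwise, the identity $\mathcal{C}(\overline{\omega})=\overline{\omega}$ becomes the recursion $\overline{c}_{p\ell-1}=\overline{c}_{\ell-1}^{\,p}$ for all $\ell\ge 1$, and hence $\overline{c}_{p^i\ell-1}=\overline{c}_{\ell-1}^{\,p^i}$. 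Specialising to $\ell=m$ with $1\le m\le k$ yields two facts: $\val(c_{kp^i-1})=0$ for every $i\ge 0$ (since $\overline{c}_{k-1}\ne 0$), so the coefficient of $T^{kp^i}$ in $\log(f)$ has valuation exactly $-i$; and $\val(c_{mp^i-1})\ge 1$ for $1\le m\le k-1$ (since $\overline{c}_{m-1}=0$).

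Now I assemble the Newton polygon of $L:=\log(f)-\log(f(z_0))=\sum_{\ell\ge 1}\frac{c_{\ell-1}}{\ell}T^\ell$. Let $P$ be the convex polygon through the points $(kp^i,-i)$, $i\ge 0$; its negative slopes are $\frac{1}{k(p^n-p^{n-1})}$, $n\ge 1$. The first fact puts each $(kp^i,-i)$ on $\NP(L)$ at the correct height, and to see these are its only vertices for $x\ge k$ it suffices to check $\val(c_{\ell-1})-\val(\ell)\ge P(\ell)$ for all $\ell\ge k$. Writing $kp^{j-1}\le\ell\le kp^j$: if $\val(\ell)\le j-1$ then $\val(c_{\ell-1})-\val(\ell)\ge-(j-1)\ge P(\ell)$; if $\val(\ell)=j$ then $k<p$ forces $\ell=mp^j$ with $1\le m\le k$, and for $m<k$ the second fact gives $\val(c_{\ell-1})-\val(\ell)\ge 1-j=-(j-1)>P(\ell)$, while $m=k$ is the vertex $(kp^j,-j)$ itself. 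For $1\le\ell<k$ one has $\val(\ell)=0$ (as $\ell<k<p$) and $\val(c_{\ell-1})>0$. Hence $\NP(L)$ coincides with $P$ for $x\ge k$, while in degrees $1,\dots,k-1$ the coefficients of $L$ have positive integer valuation.

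It remains to restore the constant $v_0:=\val(\log(f(z_0)))$. In case (1), $v_0>0$; as $z_0\in X(\Z_p^{\nr})$ this forces $v_0\ge 1$, and since the line through $(k,0)$ and $(kp,-1)$ meets the $y$-axis at height $\tfrac1{p-1}\le 1$, the point $(0,v_0)$ and the points $(\ell,\val(c_{\ell-1}))$ for $1\le\ell<k$ (whose heights are $\ge 1$) all lie on or above that line, so $\NP(\log f)$ still agrees with $P$ for $x\ge k$; this gives (1). In case (2), $v_0\le 0$ (the case that occurs being $v_0=0$), and now the chord from $(0,v_0)$ to $(kp,-1)$ lies at height $\le-\tfrac1p$ above $x=k$, so $(k,0)$ is no longer a vertex and the polygon runs $(0,0)\to(kp,-1)\to(kp^2,-2)\to\cdots$, with negative slopes $\tfrac1{kp}$ and $\tfrac1{k(p^n-p^{n-1})}$, $n>1$; this gives (2). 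For (3) it suffices to look at $\NP(L)$, which governs the zeros once the constant term is accounted for: when $k=1$ the leading coefficient $c_0$ of $L$ is a unit, so $(1,0)$ is already its first vertex and, by the third paragraph, every negative slope of $\NP(L)$ is $\le\tfrac1{p-1}$; hence a negative slope exceeding $\tfrac1{p-1}$ forces $k\ge 2$, i.e.\ $\tfrac{df}{f}$ vanishes at $\overline{z}$. The step I expect to be the main obstacle is the exact identification of $P$ in the third paragraph: one must check that the Cartier recursion, the bare integrality $\val(c_{\ell-1})\ge 0$ and the bound $k<p$ together keep every point $(\ell,\val(c_{\ell-1})-\val(\ell))$ on or above $P$, so that no spurious vertices intervene — and it is exactly here that the hypothesis $k<p$ is used. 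Identifying $\overline{\omega}$ as Cartier-fixed and extracting the recursion is the conceptual heart but routine once set up.
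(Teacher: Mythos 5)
Your route through parts (1) and (2) is genuinely different from the paper's, and it works. The paper's proof is the substitution argument: write $\log (f)=\log (f(z_0))+\log (1+u)$ with $u=(f-f(z_0))/f(z_0)$, note that (because $k<p$) the first unit Taylor coefficient of $u$ is the $k$th, and transport the polygon of $\log (1+t)$ (vertices $(p^i,-i)$) through the substitution. You instead expand the formal antiderivative of $\dlog f$ and pin down exactly which coefficients are units via Cartier-fixedness of $d\overline{f}/\overline{f}$, giving the recursion $\overline{c}_{p\ell -1}=\overline{c}_{\ell -1}^{\,p}$; your convexity check is correct (in the range $kp^{j-1}\le \ell \le kp^j$ the bound $k<p$ does rule out $\val (\ell )>j$ and forces $\ell =mp^j$ with $m\le k$), and your observation that $v_0=\val \log f(z_0)$ is an integer $\geq 1$ in case (1) -- which is exactly what keeps $(k,0)$ a vertex -- is a point the paper leaves implicit. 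So (1) and (2) are established by a more self-contained argument than the paper's one-line proof.

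Part (3) has a gap. You assert that ``it suffices to look at $\NP (L)$'', but (3) is a statement about the Newton polygon of $\log f$ itself, constant term included, and the two polygons differ exactly where it matters: if $k=1$ and $0<v_0<\infty$, then $\NP (\log f)$ contains the segment from $(0,v_0)$ to $(1,0)$, i.e.\ a negative slope $v_0\geq 1>\frac{1}{p-1}$, while $\frac{df}{f}$ does not vanish at $\overline{z}$. (Concretely: $X=\G _m$, $f$ the coordinate, $z_0=1+p$; then $\log f$ has the unramified zero $T=-p$ of valuation $1$.) So the literal implication ``slope $>\frac{1}{p-1}\Rightarrow k>1$'' fails, and passing to $\NP (L)$ silently replaces it by a different claim without justifying the bridge back. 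The version that is true, and the one actually used in Lemma \ref{lemma:fin} and Proposition \ref{prop:ram}, concerns slopes that are in addition non-integral (equivalently, ramified zeroes): since $z_0$ and $f$ are unramified, $v_0\in \Z _{\geq 1}\cup \{\infty \}$, so for $k=1$ every slope of $\NP (\log f)$ other than the integral one contributed by the constant term is $\leq \frac{1}{p-1}$, and a non-integral slope exceeding $\frac{1}{p-1}$ does force $k>1$. Your paragraphs on (1)--(2) already contain everything needed for that corrected statement; you should either prove (3) in that form, or explain explicitly why the constant-term segment may be discarded, rather than asserting the reduction to $\NP (L)$.
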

\begin{proof}
Since $k<p$, it follows from \eqref{eqn:slp} that the first coefficient of the Taylor expansion of $\frac{f(z)-f(z_0 )}{f(z_0 )}$ which has valuation zero is the $k $th one. Hence, for all $i\geq 0$, the first coefficient of the Taylor expansion of $\log (f)$ which has valuation $-i$ is either the zeroth coefficient (if $\val (\log (f(z_0 )))=-i$) or the $kp^i$th coefficient. This implies part (1), since if $\val (\log (f(z_0 )))>0$ then the convex polygon formed by the points $(kp^i ,-i)$ is equal to the part of the Newton polygon of $\log (f)$ to the left of $x=k$. For part (2), this similarly gives the Newton polygon. 

For part (3), the preceding discussion implies that if $\log (f)$ contains a negative slope greater than $\frac{1}{p-1}$, then it arises from the vanishing mod $p$ of the first coefficient of the Taylor expansion. If the slope is non-integral, then (since the valuations of the Taylor coefficients are all integers and the $i$th coefficient has valuation at least $-\log _p (i)$) it must also be the case that the second coefficient of the Taylor expansion vanishes mod $p$, i.e. $\frac{df}{f}$ vanishs at $\overline{z}$.
\end{proof}
We now apply this to bound the ramification of points in $X(\overline{\Q }_p )\cap (\mathbb{G}_m ^3 )^{[2]}$. First we need the following Lemma, which allows us to restrict to the case of \textit{integral} points on $X$.

\begin{lemma}\label{lemma:integral}
\begin{enumerate}
\item Let $X$ be a geometrically irreducible curve over $\Z _p ^{\nr}$ with a map $(f_1 ,\ldots ,f_n ):X\to \mathbb{G}_m ^n $. Let $C$ be the projective closure of the normalisation $\widetilde{X}$ of $X$ and $\{ P_1 ,\ldots ,P_k \}=C-\widetilde{X}$ be the union of the divisors of the $f_i$. Suppose the $P_i$ are pairwise disjoint modulo $p$ (i.e. the divisor $\sum P_i $ is \'etale over $\Z _p ^{\nr}$). Then there are are finitely many surjective homomorphisms
\[
\theta _j :\mathbb{G}_m ^n \to \mathbb{G}_m ^{n-1}
\]
such that if $z\in X(\Q _p ^{\mathrm{nr}})\cap (\mathbb{G}_m ^n )^{[2]}$, and $f_i (z)$ is not in $R^\times $, then the image of $z$ in $(\mathbb{G}_m )^{n-1}$ under one of the $\theta _j$ lies in $(\mathbb{G}_m ^{n-1})^{[2]}(R)$. In particular, when $n=3$ all but finitely many points of $X(\overline{\Q }_p )\cap (\mathbb{G}_m ^3 )^{[2]}$ are integral.
\item Let $X$ be a geometrically irreducible curve over the $S$-integers of a number field $K$, and let $(f_1 ,\ldots ,f_n ):X\to \mathbb{G}_m ^3$ be an $\mathcal{O}_{K,S}$-morphism. Then there is a finite set $T\supset S$ of primes of $K$ such that all but finitely many $\overline{K}$-points of $X(\overline{K})\cap (\mathbb{G}_m ^3 )^{[2]}$ are $T$-integral.
\end{enumerate}
\end{lemma}
\begin{proof}
First we prove part (1). Write $\divv (f_i )=\sum a_{ij}P_j $. 
Suppose we have linearly independent $(n_i)$ and $(m_i )$ in $\Z ^n$ such that
\[
\prod f_i (z)^{n_i }=\prod f_i (z)^{m_i }=1.
\]
Suppose the valuation of one of the $f_i (z)$ is not zero. Our assumptions imply that there is a unique $P_j$ such that $z$ is congruent to $P_j$. Hence
\[
\sum n_i a_{ij}=\sum m_i a_{ij}=0.
\]
Hence we may take $\theta _j$ to be the map
\[
(z_1 ,\ldots ,z_n )\mapsto (z_1 ^{a_{\ell j}}z_\ell ^{-a_{1j}},\ldots ,z_n ^{a_{\ell j}}z_\ell ^{-a_{nj}}),
\]
where $a_{\ell j}\neq 0$. 

When $n=3$, we deduce that non-integral points of $X(\overline{\Q }_p )\cap (\mathbb{G}_m ^3 )^{[2]}$ map into torsion points of $\mathbb{G}_m ^2 $ under one of the $\theta _j $. Since $X$ is not contained in a translate of a proper subgroup of $\mathbb{G}_m ^3 $, the image of $X$ under $\theta _j$ is not contained in a translate of a proper subgroup. By Ihara--Lang--Serre--Tate \cite{lang}, only finitely many $\overline{\Q }_p $-points of $X$ can map to torsion points of $\mathbb{G}_m ^2 $, which completes the proof of (1).

For part (2), let $C$ again be the projective closure of the normalisation $\widetilde{X}$ of $X_K$ and $\{ P_1 ,\ldots ,P_k \} =(C-\widetilde{X})(\overline{K})$. Let $\mathcal{C}$ be a regular model of $C$ over $\mathcal{O}_K$. Then there is a finite set of primes $T$ such that $K|\Q $ is unramified at all primes not in $T$, the $P_i$ are pairwise distinct modulo $\mathfrak{p}$ for all primes $\mathfrak{p}$ not in $T$, and such that a $K$-point of $X$ is $T$-integral if and only if it comes from an $\mathcal{O}_{K,T}$-point of $\mathcal{C}-\{ P_1 ,\ldots ,P_k \}$.

As in part (1), we may construct a finite set of finite morphisms $\theta _j :X\to \mathbb{G}_m ^2 $ such that elements of $X(\overline{K})\cap \mathbb{G}_m ^{[3]}$ which are not $T$-integral map to torsion points of $\mathbb{G}_m ^2 $ under one of the $\theta _j$. As in part (1), the Ihara--Lang--Serre--Tate theorem completes the proof of part (2).
\end{proof}

\begin{proposition}\label{prop:ram}
Let $X$ be a smooth projective curve over $\Z _p ^{\nr }$ of genus $g$ and $f_1 ,\ldots ,f_n$ functions with pairwise disjoint divisors modulo $p$. Let $Z$ be the union of all points where one of the $f_i$ has a zero or pole, and let $d$ be the degree of the divisor $Z$. If $p\geq 2g+d$ and the $\overline{\Q }_p $-points of $Z$ have pairwise distinct reductions in $\overline{\F }_p$, then every point of $X(\overline{\Q }_p )\cap (\mathbb{G}_m ^{n})^{[2]}$ has ramification degree at most $2g+d$.
\end{proposition}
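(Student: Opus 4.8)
The plan is to attach to each point $x$ of the intersection two functions of the form $\log f=\sum_i\lambda_i\log f_i$ that both vanish at $x$, whose Newton polygons on the residue disc of $x$ have incompatible tails, and then to read off the ramification of $x$ from the length of the Newton polygon segment through $\val(t(x))$. First the linear algebra. If $(f_i(x))$ lies in a subgroup $H\le\mathbb{G}_m^n$ of codimension $\geq2$, the lattice $\Lambda_x=\{a\in\Z^n:\prod_i f_i(x)^{a_i}\in\mu_\infty\}$ contains the character lattice of $\mathbb{G}_m^n/H^0$, so $\rk\Lambda_x\geq2$; it is saturated, since $\mu_\infty$ is $p$-divisible in $\overline{\Q}_p^\times$. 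Put $f_a=\prod_i f_i^{a_i}$ and $\omega_a=\sum_i a_i\,df_i/f_i\in H^0(X,\Omega_X(Z))$; as $\log$ kills roots of unity, $\log f_a(x)=0$ for all $a\in\Lambda_x$. By the good-reduction and disjoint-divisors hypotheses the $\overline{df_i/f_i}$ are $\F_p$-linearly independent differentials of the third kind on $\overline X$ with poles along $\overline Z$, so $a\mapsto\overline{\omega_a}$ is injective on $\Lambda_x\otimes\F_p$, with image a space of sections of a line bundle of degree $2g-2+d$; hence any nonzero $\overline{\omega_a}$ vanishes to order $\leq2g-2+d$ at any point of $\overline X\setminus\overline Z$.

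Fix $x$ with reduction $\overline z$, and set $\Lambda_x^{\circ}=\{a\in\Lambda_x:\overline{f_a}\text{ is a unit at }\overline z\}$. If $\overline z\notin\overline Z$ then $\Lambda_x^{\circ}=\Lambda_x$; if $\overline z\in\overline Z$ a unique $f_{i_0}$ degenerates at $\overline z$ and $\Lambda_x^{\circ}=\{a:a_{i_0}=0\}$ has corank $\leq1$ in $\Lambda_x$, but since $(\val f_i(x))_i$ lies in the cocharacter lattice of $H^0$ and is supported on $\{i_0\}$, the coweight $\varepsilon_{i_0}$ lies in that lattice, so $(f_i(x))_{i\neq i_0}$ lies in a codimension $\geq2$ subgroup of $\mathbb{G}_m^{n-1}$ and $\rk\Lambda_x^{\circ}\geq2$ in every case. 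Choose $z_0\in X(\Z_p^{\nr})$ reducing to $\overline z$ and an integral parameter $t$ at $z_0$ defined over $\Z_p^{\nr}$. For $a\in\Lambda_x^{\circ}$ the function $\log f_a$ restricts to a rigid analytic function on the tube $]\overline z[$ with coefficients in $\Q_p^{\nr}$, hence with integer valuations, and $\val(\log f_a(z_0))>0$ (it is the logarithm of a unit of $\Z_p^{\nr}$); so Lemma \ref{lemma:slop}(1) applies with $k=k_a:=\ord_{\overline z}(\overline{\omega_a})+1\leq2g-1+d<p$, giving that the slopes of $\NP(\log f_a|_{]\overline z[})$ to the right of abscissa $k_a$ are the numbers $\tfrac1{k_a(p^m-p^{m-1})}$ ($m\geq1$), while every slope to its left lies over a segment of width $\leq k_a$. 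By the usual Newton polygon/Weierstrass factorisation over $\Q_p^{\nr}$, a zero of $\log f_a|_{]\overline z[}$ whose valuation is a slope of the second (``left'') kind generates an extension of $\Q_p^{\nr}$ of degree at most the width of the corresponding segment, hence of ramification index $\leq k_a\leq2g+d$.

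To finish, let $k_0=\min\{k_a:a\in\Lambda_x^{\circ},\ \overline{\omega_a}\neq0\}$, attained at some $a$; as the leading coefficient at $\overline z$ (vanishing precisely where $\overline{\omega_a}$ vanishes to order $\geq k_0$) is a nonzero linear functional on $\Lambda_x^{\circ}\otimes\F_p$, of dimension $\geq2$, there is $a'$ with $\overline{\omega_{a'}}\neq0$ and $k_{a'}>k_0$, and $k_0,k_{a'}<p$. Now $\lambda=\val(t(x))$ is a slope of both $\NP(\log f_a|_{]\overline z[})$ and $\NP(\log f_{a'}|_{]\overline z[})$, and $\Q_p^{\nr}(x)=\Q_p^{\nr}(t(x))$. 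If $\lambda$ is a ``left'' slope for $a$ or for $a'$, we are done by the previous paragraph. Otherwise $\lambda=\tfrac1{k_0(p^m-p^{m-1})}=\tfrac1{k_{a'}(p^{m'}-p^{m'-1})}$, whence $k_0p^{m-1}=k_{a'}p^{m'-1}$, and since $p\nmid k_0,k_{a'}$ this forces $k_0=k_{a'}$, a contradiction. Hence every point of $X(\overline{\Q}_p)\cap(\mathbb{G}_m^n)^{[2]}$ has ramification degree at most $2g+d$.

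The step I expect to be the main obstacle is verifying the two places where ``$p\geq2g+d$'' and ``pairwise disjoint divisors modulo $p$'' are genuinely used: the $\F_p$-linear independence of the reductions $\overline{df_i/f_i}$ (so that $k_a$ is well defined and bounded by $2g-1+d<p$), and the reduction of the boundary case $\overline z\in\overline Z$ to the main argument via the valuation vector of $(f_i(x))$. Everything else is a bookkeeping of the Newton polygons furnished by Lemma \ref{lemma:slop}.
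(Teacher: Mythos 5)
Your argument is correct and follows the paper's own strategy: extract two independent multiplicative relations at the point, produce two logarithmic forms whose reductions vanish to distinct orders $k_1<k_2<p$ at $\overline z$, and play the Newton polygons of the corresponding $\log$-functions against each other via Lemma \ref{lemma:slop}. The differences are in execution, and they mostly work in your favour. Where the paper invokes Lemma \ref{lemma:integral} to reduce to points integral for the $f_i$, you treat the residue discs meeting $\overline Z$ by a pointwise lattice argument (the valuation vector of $(f_i(x))$ lies in the cocharacter space of $H^0$ and is supported at the single degenerate index, so the relation lattice with that coordinate removed still has rank $2$); this is a clean substitute. Where the paper takes $\omega_2=\dlog(g_2)-\alpha\dlog(g_1)$ with $\alpha$ a lift of a residue-field value --- so that Lemma \ref{lemma:slop}, stated for integer exponent vectors, is being stretched slightly --- you stay inside the lattice and use $\dim_{\F_p}(\Lambda_x^\circ\otimes\F_p)\geq 2$ to find an integral $a'$ with strictly larger vanishing order, so the lemma applies verbatim. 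You also make explicit a step the paper leaves implicit: Lemma \ref{lemma:slop}(1) only describes the polygon to the right of abscissa $k$, and your Galois-orbit/segment-width argument for zeroes whose valuations are ``left'' slopes is exactly what converts the non-existence of a common tail slope into the stated bound on the ramification degree. The one thing you assume silently --- $\F_p$-independence of the reductions $\overline{df_i/f_i}$, which pairwise disjointness of the divisors mod $p$ does not literally give if some multiplicities are divisible by $p$ --- is equally implicit in the paper's proof (it is needed there for $k_2$ to be finite), so it is an issue with the proposition's hypotheses as stated rather than with your argument.
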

\begin{proof}
By Lemma \ref{lemma:integral}, we may reduce to the case where all points on $X(\overline {\Q }_p )\cap (\mathbb{G}_m ^{r})^{[2]}$ are integral with respect to the $f_i$. Let $z$ be a ramified point in $X(\overline{\Q }_p )\cap (\mathbb{G}_m ^r )^{[2]}$. Let $\overline{z}$ be the reduction of $z$ modulo $p$. We may choose a point $z_0 \in X(\Q _p ^{\mathrm{nr}})$ congruent to $z$ modulo $p$, and an integral parameter $T$ at $z_0$, such that $\val (T(z))$ is not in $\Z $.

Recall that every codimension $d$ connected subgroup of $\mathbb{G}_m ^n $ is the connected component of the kernel of a surjection $\mathbb{G}_m ^n \to \mathbb{G}_m ^d $. Let $g_1 $ and $g_2 $ be composite maps
\[
X\to \mathbb{G}_m ^r \to \mathbb{G}_m ^2
\]
such that $g_i (z)$ is torsion. If $]\overline{z}[$ contains a point in $(\mathbb{G}_m ^n )^{[2]}$ of ramification degree $N$, then it contains at least $N$ such points, since $X(\overline{\Q }_p )\cap (\mathbb{G}_m ^n )^{[2]}$ is stable under the action of $\Gal (\overline{\Q }_p |\Q _p ^{\nr})$. Hence it is enough to prove that the number of common slopes of $g_1 $ and $g_2$, counted with multiplicity, is less than $2g+d$. By the description of the slopes of $\log (g_i )$ in Lemma \ref{lemma:slop}, it is enough to prove that, for any $\lambda <\frac{1}{2g+d}$, there exist $\mu _1 ,\mu _2 \in \Z$ such that $\lambda $ is not a slope of $\mu _1 \log (g_1 )+\mu _2 \log (g_2 )$.

Since all positive slopes less than $\frac{1}{2g+d}$ are of the form $\frac{1}{kp^i}$ or $\frac{1}{k(p^i -p^{i-1})}$, and $k<2g+d<p$, it is enough to first prove the result for $\lambda $ of the form $\frac{1}{k(p^i-p^{i-1})}$ and then for $\lambda $ of the form
Without loss of generality we may assume $\ord _{\overline{z}}\frac{dg_1 }{g_1 }\leq \ord _{\overline{z}}\frac{dg_2 }{g_2 }$. Define $\omega _1 =\dlog (g_1 )$ and $\omega _2 =\dlog (g_2 )-\alpha \dlog (g_1 )$, where $\alpha$ is the value of $\frac{\dlog g_2 }{\dlog (g_1 )}$ at $\overline{z}$. Then 
\[
\int ^z \omega _1 =\int ^z \omega _2 =0.
\]
Let $k_i $ be the order of vanishing of $\omega _i$ at $\overline{z}$, and let $a_i$ be the valuation of $\int ^{z_0 }_b \omega _i $. Then $k_1 <k_2 \leq 2g+d$. On the other hand by Lemma \ref{lemma:slop}, the slopes of $\int \omega _i $, viewed as power series in $T$, are $\frac{1}{k_i p^{a_i +1}}$ and/or $\frac{1}{k_i (p^n-p^{n-1})}$. Since $p>k_2 >k_1 $, $\int \omega _1 $ and $\int \omega _2 $ do not have a common (negative) slope less than $\frac{1}{2g+d}$ of the form $\frac{1}{k_i (p^n-p^{n-1})}$. 

We now consider slopes of the form $\frac{1}{kp^i}$. Let $v_i$ be the valuation of $\log (g_i (z_0 ))$. If $v_1 =v_2$, we may similarly we may choose $\mu _1 ,\mu _2 \in \Z -p\Z $ such that 
\[
\val (\mu _1 \log (g_1 (z_0 ))+\mu _2 \log (g_2 (z_0 )))>v_1 .
\]
It follows that if $\log (g_1 )$ and $\log (g_2 )$ both have a slope of the form $\frac{1}{k_i p^{v_i }}$, then this is not a slope of $\mu _1 \log (g_1 )+\mu _2 \log (g_2 )$.
\end{proof}
\begin{remark}
At first sight, this proof depends in an essential way on the explicit description of the formal expansion of $\log $ around $1$. The beautiful insight of Coleman (see \cite{coleman:ramified}, \cite{DP}) is that there is another expansion, valid for \textit{any} abelian integral $\int \omega $, which can similarly capture the slopes of $\int \omega $ in terms of zeroes of differentials on the special fibre. We will not use this in what follows.
\end{remark}

\begin{lemma}\label{lemma:fin}
Let $X$ be as in Proposition \ref{prop:ram}. Then only finitely many residue disks contain a point in $X\cap (\mathbb{G}_m ^r )^{[2]}$ that is ramified at $p$.
\end{lemma}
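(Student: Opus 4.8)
The plan is to sharpen the slope analysis inside the proof of Proposition~\ref{prop:ram}: for a point $z$ ramified at $p$ it should record not merely a bound on the ramification, but a congruence condition on the reduction $\overline z$ --- namely that $\overline z$ lie over an $\F_p$-point of $\mathbb{P}^2$ under a logarithmic Gauss map, so that only finitely many $\overline z$ can occur. We give the argument for $(\mathbb{G}_m^3)^{[2]}$, the case needed for Theorem~\ref{thm:main}. First I would dispose of the ``easy'' disks. By Lemma~\ref{lemma:integral} a non-integral point of $X(\overline{\Q}_p)\cap(\mathbb{G}_m^3)^{[2]}$ maps, under one of the finitely many $\theta_j$, into $(\mathbb{G}_m^2)^{[2]}$ (which is just the torsion subgroup $\mu_\infty^2$); as in the proof of Proposition~\ref{prop:ram}, combined with the Manin--Mumford conjecture for $\mathbb{G}_m^2$ (finiteness of $C\cap\mu_\infty^2$ for a curve $C$ generating $\mathbb{G}_m^2$), this shows the non-integral points form a finite set, hence meet only finitely many residue disks. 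Setting aside also the finitely many disks lying over the support $Z$ of the divisors of the $f_i$, and those whose centre is a common zero of all the reductions $\overline{\dlog f_i}$, it remains to bound the disks $]\overline z[$, $\overline z\notin Z$, containing a point $z$ that is ramified at $p$ and at which every $f_i$ is a unit.

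Fix such a disk and run the argument of Proposition~\ref{prop:ram}. Since $z$ lies on a subgroup of codimension $2$, there is a rank-$2$ direct summand $\Lambda\subseteq\Z^3$ of the character lattice all of whose characters take torsion values at $z$; fix $\chi\in\Lambda$ minimising $\ord_{\overline z}(\dlog\chi)$. Then $\int^z\dlog\chi=0$, so, writing $\int\dlog\chi$ as a power series in an integral parameter $T$ at some $z_0\in X(\Q_p^{\nr})$ congruent to $z$, the number $\val(T(z))$ is one of its negative slopes. Now $\val(T(z))\notin\Z$ since $z$ is ramified, and $\val(T(z))\geq\frac1{2g+\deg Z}$ by Proposition~\ref{prop:ram}; on the other hand every ``$p$-power'' slope $\frac1{kp}$ or $\frac1{k(p^m-p^{m-1})}$ in the list supplied by Lemma~\ref{lemma:slop} is $\leq\frac1{k(p-1)}$, hence $<\frac1{2g+\deg Z}$ once $p$ exceeds $2g+\deg Z+1$, so $\val(T(z))$ must equal the remaining, ``boundary'' slope $\frac{\val(\log\chi(z_0))}{\ord_{\overline z}(\dlog\chi)+1}$. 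Since $\chi(z_0)$ is a unit and $p$ is odd, the numerator lies in $\Z$, so the denominator is $\geq2$; that is, $\dlog\chi$ vanishes at $\overline z$, and hence by minimality of $\ord_{\overline z}(\dlog\chi)$ the reduction $\overline{\dlog\chi'}$ vanishes at $\overline z$ for every $\chi'\in\Lambda$. Writing $u_i\in\overline{\F}_p$ for the leading value of $\overline{\dlog f_i}$ at $\overline z$, this says precisely that $(u_1,u_2,u_3)$ is orthogonal to $\Lambda\otimes\F_p$.

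The last step is geometric. The assignment $\overline z\mapsto[u_1:u_2:u_3]$ defines a morphism $\phi\colon\overline X\to\mathbb{P}^2$: it is everywhere regular because logarithmic derivatives have at worst simple poles, and non-constant because a constant value would force some monomial in the $\overline f_i$ to be constant on $\overline X$, contradicting the linear independence of $f_1,f_2,f_3$ in $K(X)^\times/K^\times$ (for $p$ outside a finite set). Since $\Lambda^\perp$ is a line defined over $\F_p$, the conclusion of the previous paragraph forces $\phi(\overline z)\in\mathbb{P}^2(\F_p)$; as $\mathbb{P}^2(\F_p)$ is finite and $\phi$ is non-constant, $\phi^{-1}(\mathbb{P}^2(\F_p))$ is finite. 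Together with the finitely many disks set aside in the first step, this bounds the number of residue disks in question.

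The crux --- and where all the work lies --- is the second step: converting the coincidence of Newton slopes coming out of Proposition~\ref{prop:ram} into the clean assertion that $\dlog\chi$ reduces to a differential vanishing at $\overline z$. This requires some care with the degenerate configurations (the reductions of the $\dlog f_i$ becoming linearly dependent mod $p$, the value $\chi(z_0)$ reducing to an exact root of unity, or $\ord_{\overline z}(\dlog\chi)+1$ being comparable to $p$), and it is here that one needs $p$ somewhat larger than in Proposition~\ref{prop:ram}; the bound $p\geq 2g+\deg Z+2$ is comfortably enough and costs nothing, since $p$ may be chosen freely in the applications. The geometry of the last step (regularity and non-constancy of the Gauss map, finiteness of $\mathbb{P}^2(\F_p)$) is then routine.
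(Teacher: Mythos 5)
Your core mechanism is the same as the paper's: combine the ramification bound of Proposition \ref{prop:ram} (so that $\val(T(z))$ is a non-integer $\geq \tfrac{1}{2g+d}$, hence larger than every slope of the form $\tfrac{1}{k(p^m-p^{m-1})}$ or $\tfrac{1}{kp}$ once $p>2g+d+1$) with the Newton polygon analysis of Lemma \ref{lemma:slop}, and conclude via the boundary slope (integral numerator, so denominator $k\geq 2$) that the reduction of $\dlog\chi$ vanishes at $\overline z$; this is exactly the content of Lemma \ref{lemma:slop}(3) as the paper uses it, and your spelling-out of it is if anything more careful. Where you diverge is the bookkeeping. The paper simply observes that the vanishing condition depends only on $(n_i)$ modulo $p$, so $\overline z$ is a zero of one of the finitely many nonzero differentials $\sum n_i\,\overline{\dlog f_i}$ with $(n_i)\in\{0,\dots,p-1\}^n\setminus\{0\}$, each with finitely many zeros; you instead use the full rank-two lattice $\Lambda$ and a logarithmic Gauss map $\phi:\overline X\to\mathbb{P}^2$ together with finiteness of $\mathbb{P}^2(\F_p)$. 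Your opening step is also heavier than needed: the appeal to Lemma \ref{lemma:integral} plus Manin--Mumford for $\mathbb{G}_m^2$ is unnecessary (and the intermediate claim that the non-integral points form a finite set can fail when the image curve is a torsion coset); non-integral points reduce into $Z$, so discarding the finitely many disks over $Z$, as you also do, already suffices.

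The one genuine soft spot is your justification of the non-constancy of $\phi$, which your route makes essential. If $\phi$ is constant with value $[c_1:c_2:c_3]$, this does \emph{not} force a monomial in the $\overline f_i$ to be constant: when the value is $\F_p$-rational it forces $\sum \overline n_i\,\overline{\dlog f_i}=0$ identically for some nonzero $(\overline n_i)\in\F_p^3$, i.e.\ $\prod \overline f_i^{\,n_i}$ is a constant times a $p$-th power in $\overline\F_p(X)$, and linear independence of $f_1,f_2,f_3$ in $K(X)^\times/K^\times$ does not by itself exclude this for a fixed $p$, since $\dlog$ modulo $p$ kills all $p$-th powers. What is needed is an $\F_p$-linear independence statement for the reduced logarithmic differentials (equivalently for the divisors modulo $p$); this is precisely why Proposition \ref{prop:voloch} assumes the divisors are independent in $\Div^0(X)\otimes\F_p$, and under the hypotheses of Proposition \ref{prop:ram} it follows by a residue computation: a relation $\sum\overline n_i\,\overline{\dlog f_i}=0$ with pairwise disjoint divisors modulo $p$ forces $p$ to divide every multiplicity of $\divv(f_i)$ whenever $\overline n_i\neq 0$. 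The same non-degeneracy is (tacitly) behind the paper's count as well, but your argument needs it stated and proved, and the reason you give for it is not valid as written; replacing it by the residue/divisor argument repairs the proof.
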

\begin{proof}
By Lemma \ref{lemma:slop}, if $z$ is a ramified point and $z$ satisfies 
\[
\prod f_i (z)^{n_i }\in \mu _{\infty },
\]
then the differential form
$
\sum n_i \frac{df_i }{f_i }
$ 
vanishes at $\overline{z}$, the reduction of $z$ modulo the maximal ideal of $\overline{\Z }_p $. We may assume that the GCD of the $n_i $ is $1$. Since the vanishing condition only depends on the $n_i$ modulo $p$, the number of residue discs that contain a point in $X\cap (\mathbb{G}_m ^n )^{[2]}$ that is ramified at $p$ is equal to the number of residue disks where $\sum n_i \frac{df_i }{f_i }$ vanishes as $(n_i )$ ranges over $\{ 0,\ldots ,p-1\}^n -(0,\ldots ,0)$.
\end{proof}
\begin{remark}
In fact, it follows from the proof that the number of residue disks is bounded by $\frac{p^n-1}{p-1}(2g-2+d-1)$.
\end{remark}
Lemma \ref{lemma:fin} and Proposition \ref{prop:ram} should be viewed as weaker versions of the following theorem of Coleman \cite{coleman:ramified} in the context of Manin--Mumford.
\begin{theorem}[Coleman]\label{thm:CRMM}
If $X/\Q _p ^{\mathrm{nr}}$ is a smooth projective curve of genus $g\leq p/2$, with good reduction, and 
\[
\iota :X\to \Jac (X)
\]
is an Abel--Jacobi morphism, then 
\[
\iota (X(\overline{\Q }_p ))\cap \Jac (X)_{\tors }\subset \iota (X(\Q _p ^{\mathrm{nr}})).
\]
\end{theorem}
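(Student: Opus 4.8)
The plan is to deduce the theorem from the Newton polygon analysis of Section~\ref{sec:C}, applied to the $g$-dimensional family of $p$-adic abelian integrals $\int_b\omega$, $\omega\in H^0(X,\Omega)$, in the spirit in which Proposition~\ref{prop:ram} was deduced from Lemma~\ref{lemma:slop}. We may assume $g\geq 2$, and take the Abel--Jacobi map $\iota$ to have base point $b\in X(\Q_p^{\nr})$ (possible as $X$ has good reduction); since $\iota$ is then a closed immersion, it suffices to show that every $z\in X(\overline{\Q}_p)$ with $\iota(z)\in\Jac(X)_{\tors}$ already lies in $X(\Q_p^{\nr})$. For such $z$, functoriality of the abelian integral together with the vanishing of $\int_0\omega$ at torsion points gives $\int_b^z\omega=0$ for all $\omega\in H^0(X,\Omega)$. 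Suppose, for contradiction, that $z$ is ramified; then by Lemma~1 (the reduction of ramification to non-integral valuation) there are a point $z_0\in X(\Q_p^{\nr})$ in the residue disc of $z$ and an integral parameter $T$ at $z_0$ with $\lambda:=\val(T(z))\notin\Z$. Since $z\neq z_0$ we have $0<\lambda<\infty$.

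The heart of the matter is the behaviour of $\int_b\omega$ on the residue disc of $z_0$. I would write $F_\omega(T)=\int_b^{z_0+T}\omega=c_\omega+\int_0^T h_\omega\,dt$, where $c_\omega=\int_b^{z_0}\omega$ and $\omega=h_\omega\,dT$ on this disc, scaling $\omega$ by a power of $p$ so that $\overline{h_\omega}\neq 0$, and set $k_\omega=\ord_{\overline z}(\overline\omega)+1\in\{1,\dots,2g-1\}$. Two observations make Section~\ref{sec:C} available here: $k_\omega<p$, and in fact $p>2g$, because $g\leq p/2$ while $2g$ is composite for $g\geq 2$; and $c_\omega=\int_b^{z_0}\omega$ has valuation at least $1$, since $\Jac(X)$ has good reduction and $\int_0\omega$ vanishes on torsion and takes values in $p\,\Z_p^{\nr}$ on the formal group. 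With $k_\omega<p$, the Coleman expansion of $\int\omega$ on this disc --- the analogue of Lemma~\ref{lemma:slop} valid for an \emph{arbitrary} differential of the first kind, cf.\ the Remark after Proposition~\ref{prop:ram} and \cite{coleman:ramified} --- describes $\mathrm{NP}(F_\omega)$ in terms of $k_\omega$; specifically, every negative slope of $F_\omega$ that is less than $\tfrac1{2g-1}$ equals $\tfrac1{k_\omega(p^n-p^{n-1})}$ for some $n\geq 1$, and if $k_\omega=1$ then every negative slope of $F_\omega$ is either a positive integer or equals $\tfrac1{p^n-p^{n-1}}$ for some $n\geq 1$.

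To conclude, I would choose $\omega_1\in H^0(X,\Omega)$, integral and primitive, with $\ord_{\overline z}(\overline{\omega_1})=0$ --- possible because the canonical system of the smooth genus-$g$ curve $X_{\overline{\F}_p}$ is base-point-free for $g\geq 2$ --- so that $k_1=1$; and $\omega_2\in H^0(X,\Omega)$, integral and primitive, with $\overline{\omega_2}$ vanishing at $\overline z$ to the smallest positive order occurring among nonzero regular differentials on $X_{\overline{\F}_p}$, so that $2\leq k_2\leq 2g-1<p$. Since $F_{\omega_1}(T(z))=\int_b^z\omega_1=0$ and $\lambda\notin\Z$, the description of $\mathrm{NP}(F_{\omega_1})$ forces $\lambda=\tfrac1{p^n-p^{n-1}}$ for some $n$, so $\lambda\leq\tfrac1{p-1}<\tfrac1{2g-1}$. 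Since also $F_{\omega_2}(T(z))=0$, the description of $\mathrm{NP}(F_{\omega_2})$ then forces $\lambda=\tfrac1{k_2(p^m-p^{m-1})}$ for some $m$. Hence $p^{n-1}=k_2\,p^{m-1}$, and comparing $p$-adic valuations (using that $k_2$ is prime to $p$) gives $n=m$, whence $k_2=1$, contradicting $k_2\geq 2$. Therefore no ramified $z$ can exist, and the theorem follows.

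The hard part, and the one genuinely new ingredient over the elementary slope computation behind Lemma~\ref{lemma:slop} (which handled only logarithms of products of units), is establishing the Coleman expansion invoked above: an expansion of $\int\omega$ on a residue disc, valid for any $\omega\in H^0(X,\Omega)$, whose leading behaviour is controlled by $\overline\omega$ and whose tail satisfies a Frobenius-type recursion that reproduces the characteristic $p^n$-pattern of Newton slopes. This is Coleman's key insight; see \cite{coleman:ramified}, and \cite{DP} for a streamlined treatment. Two smaller points would need care: that $c_\omega$ has integral and non-negative valuation --- the former automatic since $b,z_0\in X(\Q_p^{\nr})$, the latter as indicated above (one can instead carry the argument through both cases of the expansion, exactly as in Lemma~\ref{lemma:slop}(1)--(2)) --- and the verification of $k_\omega<p$, which is precisely where the hypothesis $g\leq p/2$ enters.
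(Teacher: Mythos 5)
First, a point of reference: the paper does not prove Theorem~\ref{thm:CRMM} at all --- it is quoted from Coleman \cite{coleman:ramified} (with the stronger divisible-hull statement attributed to \cite{DP}), and Lemma~\ref{lemma:fin} and Proposition~\ref{prop:ram} are offered only as weaker analogues. So there is no internal proof to compare yours against; what I can assess is whether your sketch amounts to a proof. Your overall shape --- reduce ramification to a non-integral valuation $\lambda=\val(T(z))$, use the vanishing of all $\int_b^z\omega$ at torsion points, and play two differentials with different vanishing orders at $\overline z$ against each other via Newton polygons --- is precisely the strategy the Remark after Proposition~\ref{prop:ram} attributes to Coleman, and your endgame mirrors the proof of Proposition~\ref{prop:ram}. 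But the entire difficulty of the theorem is concentrated in the step you black-box: that for an \emph{arbitrary} $\omega\in H^0(X,\Omega)$ the Newton polygon of $\int\omega$ on a residue disc has lower vertices at $(k_\omega p^i,-i)$. For $\log f$ this is the elementary computation of Lemma~\ref{lemma:slop}; for general $\omega$ it is exactly the Coleman expansion of \cite{coleman:ramified}, \cite{DP}, which you invoke rather than establish. As a proof, that is the missing ingredient; as a reduction of the theorem to that expansion, your argument is essentially sound.

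Two of your intermediate claims are, however, wrong as stated, though repairably so. First, $\val(c_\omega)\geq 1$ can fail: $c_\omega$ is the abelian logarithm of $\iota(z_0)$, and the reduction of $\iota(z_0)$ in $\Jac(X)(\overline{\F}_p)$ can have order divisible by $p$, so $\val(c_\omega)$ can be $0$ or even negative; all one knows is that it lies in $\Z\cup\{\infty\}$ (because $\iota$ and $z_0$ are defined over $\Q_p^{\mathrm{nr}}$). Consequently your assertion that every negative slope less than $\tfrac{1}{2g-1}$ equals $\tfrac{1}{k_\omega(p^n-p^{n-1})}$ is inaccurate: when $\val(c_\omega)=-m\leq 0$ the initial tangent segment contributes a slope $\tfrac{1}{k_\omega p^{m+1}}$ (for $m=0$ this is the $\tfrac{1}{kp}$ slope of Lemma~\ref{lemma:slop}(2)). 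You do flag the fallback of running the case analysis as in Lemma~\ref{lemma:slop}(1)--(2), and indeed it closes: a common non-integral slope of $F_{\omega_1}$ and $F_{\omega_2}$ forces $k_2(p-1)$ or $k_2$ to be a power of $p$, or $k_2=p-1$, all impossible since $2\leq k_2\leq 2g-1<p-1$ and $p\nmid k_2$; but this needs to be written out rather than asserted. Finally, ``WLOG the base point lies in $X(\Q_p^{\mathrm{nr}})$'' is not a legitimate reduction: changing the base point changes which points of $X$ map to torsion, and the statement is actually false for a ramified base point $b'$ (take $z=b'$). What your argument genuinely needs is only that $\iota$ is defined over $\Q_p^{\mathrm{nr}}$ --- the implicit hypothesis of the theorem --- so that $c_\omega$ lies in the completion of $\Q_p^{\mathrm{nr}}$ and has integral valuation; similarly the theorem implicitly requires $g\geq 2$ (your choice of $\omega_2$, and indeed the statement itself, fails for $g=1$).
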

In fact, in \cite{DP} it is shown that 
\[
\iota (X(\overline{\Q }_p ))\cap \Jac (X)(\Q _p ^{\mathrm{nr}})_{\divv }\subset \iota (X(\Q _p ^{\mathrm{nr}})),
\]
where $\Jac (X)(\Q _p ^{\mathrm{nr}})_{\divv }\subset \Jac (X)(\overline{\Q }_p )$ denotes the divisible hull of $\Jac (X)(\Q _p ^{\mathrm{nr}})$.
\section{The Buium step: bounding anomalous residue discs}
In this section we aim to prove the following result. Let $X/\Z_p ^{\mathrm{nr}}$ be a smooth geometrically irreducible curve, and let $(f_1 ,f_2 ,f_3 ):X\hookrightarrow \mathbb{G}_m ^3 $ be a closed immersion.
\begin{proposition}\label{prop:voloch}
Suppose that $X$ is not contained in a coset of a proper algebraic subgroup of $\mathbb{G}_m ^3 $ modulo $p$, and suppose that the divisors of the $f_i$ are linearly independent in $\Div ^0 (X)\otimes \F _p $. Then there are finitely many residue discs containing an unramified point in $X(\Q _p ^{\mathrm{nr}})\cap (\mathbb{G}_m ^n )^{[2]}$.
\end{proposition}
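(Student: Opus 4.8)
The plan is to carry out the ``Buium step'': replace the locally analytic functions $x\mapsto\log f_i(x)$, which each see only one residue disc, by a single \emph{globally} rigid analytic map on a $p$-jet space, which therefore has a reduction modulo $p$ that can be analysed on $X_{\overline\F_p}$ all at once. Write $R:=\widehat{\Z_p^{\nr}}$ with its Frobenius lift $\phi$, let $J^1(X)$ be the first $p$-jet space -- a $2$-dimensional rigid analytic space with jet projection $J^1(X)\to X$ and canonical (non-analytic) section $j^1\colon X(R)\to J^1(X)(R)$ -- and let $\psi\colon J^1(\mathbb{G}_m)\to\mathbb{G}_a$, $\psi(u,\dot u)=\frac1p\log(1+p\dot u\,u^{-p})$, be the order-one $\delta$-character of $\mathbb{G}_m$, a rigid analytic homomorphism with $\psi(j^1u)=\frac1p\log(\phi(u)/u^p)$ which kills the prime-to-$p$ roots of unity and hence all of $\mu_\infty(\Q_p^{\nr})$. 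I would then form the rigid analytic map $\Psi=(\psi\circ f_1,\psi\circ f_2,\psi\circ f_3)\colon J^1(X)\to\mathbb{A}^3$.

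The first step is to translate the hypothesis on $x$. If $x\in X(\Q_p^{\nr})$ is unramified with $(f_i(x))\in(\mathbb{G}_m^3)^{[2]}$, then $(f_i(x))$ lies in a one-dimensional algebraic subgroup, so $\prod_i f_i(x)^{n^{(j)}_i}$ is a root of unity for two $\Z$-independent $n^{(1)},n^{(2)}\in\Z^3$; being a $\Q_p^{\nr}$-unit it has order prime to $p$, so $\psi$ kills it, and additivity of $\psi$ gives $\sum_i n^{(j)}_i\psi(f_i(x))=0$ -- that is, $\Psi(j^1x)$ lies on the $\Q$-rational line $\langle n^{(1)},n^{(2)}\rangle^{\perp}$ in $\mathbb{A}^3$. (The torsion case is trivial: then $(f_i(x))\in\mu_{p'}^3$, a finite set.) Next one reduces modulo $p$, which is legitimate precisely because $\Psi$, unlike $\log$ itself, is globally rigid analytic. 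Thus $\overline{\Psi(j^1x)}=\bar\Psi(\overline{j^1x})$ lies on an $\F_p$-rational line, and $\overline{j^1x}$ lies in the fibre of $J^1(X)_{\overline\F_p}\to X_{\overline\F_p}$ over $\bar z$, the reduction of $x$. Since $\bar\psi(u,\dot u)=\dot u\,u^{-p}$ and $\delta$ obeys the chain rule, $\bar\Psi$ is linear along this fibre; a short computation identifies it with $\dot t\mapsto \dot t\cdot\gamma(\bar z)$, where $\gamma(\bar z)=\bigl((d\bar f_i/\bar f_i)(\bar z)\bigr)_i$ is the logarithmic Gauss direction. Hence $\overline{\delta t(x)}\cdot\gamma(\bar z)$ lies on an $\F_p$-rational line.

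If $\overline{\delta t(x)}\neq0$ -- equivalently, $x$ unramified with $\val(t(x))=1$ -- this forces $\gamma(\bar z)\in\mathbb{P}^2(\F_p)$. Here the hypotheses enter: that $X$ is not in a coset of a proper subgroup of $\mathbb{G}_m^3$ modulo $p$ -- equivalently, that the $\divv(f_i)$ are $\F_p$-linearly independent -- is exactly the statement that there is no relation $\sum_i\bar n_i\,\overline{df_i/f_i}=0$, so the logarithmic Gauss map $\bar z\mapsto[\gamma(\bar z)]\colon X_{\overline\F_p}\to\mathbb{P}^2$ is non-constant (its image is not even contained in a line). Therefore only finitely many $\bar z$ have $\gamma(\bar z)\in\mathbb{P}^2(\F_p)$ or $\gamma(\bar z)=0$, and these points $x$ are accounted for.

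The hard part will be the remaining points, those with $\overline{\delta t(x)}=0$; since $x$ is unramified this forces $\val(t(x))\ge2$, so $x$ lies deep inside $]\bar z[$, where the first jet sees nothing. I would treat these by iterating the construction with the higher-order $\delta$-characters of $\mathbb{G}_m$ -- the reduction of the order-$r$ one along the jet fibre over $\bar z$ detects $\overline{\delta^rt(x)}$, a unit exactly when $\val(t(x))=r$ -- thereby reducing matters to non-degeneracy of the ``order-$r$ Gauss maps'' of $X\hookrightarrow\mathbb{G}_m^3$ on $X_{\overline\F_p}$ (for $r=1$, the logarithmic Gauss map above); the deeper scales can alternatively be packaged through Lemma \ref{lemma:slop}, as in \S\ref{sec:C}, by writing each relation as $\log\chi^{(j)}(f(z_0))+\int_{z_0}^x\dlog\chi^{(j)}(f)=0$ for a fixed lift $z_0$ of $\bar z$ and noting that for $\val(t(x))\ge2$ the Newton polygon forces either $\dlog\chi^{(j)}(f)$ to vanish at $\bar z$ for some $j$ -- finitely many $\bar z$, by the argument of Lemma \ref{lemma:fin} -- or a higher-order congruence pinning down a second-order invariant of the disc. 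The main obstacle, in either route, is precisely to show that these successive higher-order invariants of $X\hookrightarrow\mathbb{G}_m^3$ remain non-degenerate on the special fibre, so that at every scale only finitely many residue discs survive; granting this, all but finitely many residue discs contain no unramified point of $X(\Q_p^{\nr})\cap(\mathbb{G}_m^3)^{[2]}$, as required.
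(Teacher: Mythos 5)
Your overall philosophy is the right one --- this is exactly the ``Buium step'', and the paper's proof is indeed a $\delta$-geometric argument (following Voloch) in which the $\delta$-character of $\mathbb{G}_m$ is hidden --- but there are two genuine gaps. First, your first-order computation is wrong: the reduction mod $p$ of $\psi\circ f_i$ along a fibre of $J^1(X)_{\overline{\F}_p}\to X_{\overline{\F}_p}$ is \emph{affine}-linear in $\dot t$, not linear. Indeed $\overline{\delta(f_i(x))}$ equals the reduction of $\frac{f_i^{\sigma}(\,\cdot^{\,p})-f_i(\,\cdot\,)^p}{p}$ evaluated at $\bar z$ plus $(\dlog \bar f_i(\bar z))^p$-type terms times $\overline{\delta t}$, so $\bar\psi(f_i,\delta f_i)$ has a constant term which is generically nonzero. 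Dropping it is not harmless: that constant term is precisely the polynomial $G$ (the reduction of $\frac{H^{\sigma}(x^p,y^p)-H(x,y)^p}{p}$) appearing in the paper's proof of Proposition \ref{prop:voloch2}, and the hypotheses of the proposition enter not through non-constancy of the logarithmic Gauss map, as you claim, but through the differential computation showing that $h$ does not divide $G$. With the constant term restored, your two relations become two affine equations in the single unknown $\overline{\delta t(x)}$, and finiteness requires that the resultant-type compatibility condition in $\bar z$ is not identically zero on $X_{\overline{\F}_p}$ --- which is exactly the content you have skipped. (Also, the two hypotheses are not equivalent as you assert; the paper uses them separately: independence of the divisors in $\Div^0(X)\otimes\F_p$ gives that the relevant divisors are not multiples of $p$, while the coset condition gives $\F_p$-independence of the $\dlog f_i$.)

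Second, you leave the points with $\val(t(x))\geq 2$ as an admitted open step, to be handled by an unproven non-degeneracy of higher-order Gauss maps. In the paper no such stratification by depth is needed: one passes to $W_1(\overline{\F}_p)$-points (length-two Witt vectors). Since $\mu_\infty(\Q_p^{\nr})$ consists of prime-to-$p$ roots of unity, the condition $\prod f_i(x)^{n_i}\in\mu_\infty$ forces the image of $\prod f_i(x)^{n_i}$ in $W_1(\overline{\F}_p)^\times$ to lie in the Teichm\"uller locus $D$, a condition depending only on $x \bmod p^2$; and since $p$-th powers land in $D$, the exponents may be taken in $\{0,\dots,p-1\}$, reducing the infinitely many rank-one tori to finitely many pairs $(f,g)$. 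Proposition \ref{prop:voloch2} then shows that the qualifying points of $X(W_1(\overline{\F}_p))$ lie on both $h=0$ and $G=0$, and finiteness follows from $h\nmid G$ by a Bezout-style argument that treats all depths in each residue disc simultaneously. To repair your proof you would either have to establish the higher-order non-degeneracy you flag, or (better) follow this Witt-vector route, where the $W_2$-level information already suffices.
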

The proof closely follows a related result of Voloch \cite{voloch} which gives a quantitative bound on the proximity of plane curves to roots of unity, strengthening a (special case of) the Tate--Voloch theorem on the proximity of algebraic varieties to roots of unity \cite{TV}. A more conceptual approach to the proof given is provided by the theory of arithmetic jet spaces \cite{BPS},\cite{buium:96}, however because we are working in $\mathbb{G}_m $ their role will be hidden and we can give a more elementary exposition. The reader may wish to compare our proof with the proof of the following theorem of Buium in the context of Manin--Mumford \cite{buium:96}.
\begin{theorem}[Buium]\label{thm:BMM}
Let $p$ be an odd prime, and $X/\Z _p ^{\mathrm{nr}}$ be a smooth projective curve of genus $g>1$, and $\iota :X\to \Jac (X)$ an Abel--Jacobi morphism relative to a point $b\in X(\Z _p ^{\mathrm{nr}})$. Then the image of $X(\Z _p ^{\mathrm{nr}})\cap \iota ^{-1}(\Jac (X)_{\tors})$ in $X(\overline{\F }_p )$ has size at most 
\[
p^{4g}3^g [p(2g-2)+6g]g!.
\]
\end{theorem}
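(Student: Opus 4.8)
The plan is to run the argument of Proposition~\ref{prop:voloch}, with the $\delta$-characters of $\Jac(X)$ playing the role that the explicit $\delta$-character of $\G_m$ plays there. Write $A=\Jac(X)$, a $g$-dimensional abelian variety over $\Z_p^{\mathrm{nr}}$ with good reduction, and let $J^2 A$ be its arithmetic jet space of order $2$: a smooth $p$-formal group scheme of relative dimension $3g$ with $J^2 A(\Z_p^{\mathrm{nr}})=A(\Z_p^{\mathrm{nr}})$ via the canonical section $\nabla$. The crucial input is Buium's existence theorem \cite{buium:95},\cite{buium:96}: since $A$ has good reduction there are $\delta$-characters $\psi_1,\dots,\psi_g\colon J^2 A\to\widehat{\G}_a$ whose associated invariant $1$-forms form a basis of $H^0(A,\Omega^1)=H^0(X,\Omega^1)$. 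Each $\psi_i$ induces a homomorphism $A(\Z_p^{\mathrm{nr}})\to\Z_p^{\mathrm{nr}}$, which kills torsion because the target is torsion free; so, setting $h_i:=\psi_i\circ\iota$, every $x\in X(\Z_p^{\mathrm{nr}})$ with $\iota(x)\in A_{\tors}$ is a common zero of the order-$\le 2$ $\delta$-functions $h_1,\dots,h_g$ on $X$. Equivalently $\nabla(x)$ lies in the $p$-formal subscheme $Z:=\{h_1=\dots=h_g=0\}$ of the three-dimensional jet space $J^2 X$, and it is enough to bound the number of residue discs of $X$ met by $Z(\Z_p^{\mathrm{nr}})$.

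The first step is to convert this into an intersection-theoretic count modulo $p$. Reducing, the $h_i$ become regular functions $\overline h_i$ on the smooth threefold $J^2 X_{\overline{\F}_p}$, and $\nabla$ identifies the residue disc $]\overline z[$ with the fibre of $J^2 X_{\overline{\F}_p}\to X_{\overline{\F}_p}$ over $\overline z$. Distinct residue discs met by $Z(\Z_p^{\mathrm{nr}})$ reduce to distinct points of $\overline Z:=\{\overline h_1=\dots=\overline h_g=0\}$ lying over distinct points of $X_{\overline{\F}_p}$, so if $\overline Z$ is finite its degree bounds the number of such discs. Using Buium's explicit description of $J^2$ and of the $\psi_i$ \cite{buium:96},\cite{BPS}, one bounds the degree of each $\overline h_i$ on a suitable projective compactification of $J^2 A_{\overline{\F}_p}$; taking the product over the $g$ functions and applying a B\'ezout-type estimate, together with a bound of the form $p(2g-2)+6g$ for the number of residue discs on which the relevant differentials degenerate (see below), produces the stated constant $p^{4g}3^g[p(2g-2)+6g]g!$, the powers of $p$ recording the Frobenius twists carried by $J^2$ and the $g!$ coming from the $g$-th power of a polarisation of $A$.

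The main obstacle is that $\overline Z$ need not be finite --- this is exactly the phenomenon that, for $\G_m$, makes $\log$ vanish in infinitely many residue discs. Indeed $\overline Z$ is cut out on $J^2 X_{\overline{\F}_p}$ by its intersection with the mod-$p$ Manin kernel $\bigcap_i\ker\overline\psi_i$, which has codimension $g$; this intersection is generally improper (for $g\ge 4$ it is improper for dimension reasons alone, and already for $g=2$ it dominates $X_{\overline{\F}_p}$ generically), so $\overline Z$ may have components dominating $X_{\overline{\F}_p}$ and the reduction modulo $p$ by itself proves nothing. Overcoming this is the heart of Buium's argument, and it is where the order-$2$ structure of the $\psi_i$ is genuinely needed: an \emph{exact} common zero $x$ of the $h_i$ satisfies the defining $\delta$-equations of $Z$ on the nose, not merely modulo $p$, and so the solutions cannot fill out a dominating component. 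Concretely, one analyses the Newton polygons of the $h_i$ with respect to an integral parameter on each residue disc, in the style of Lemma~\ref{lemma:slop} and Proposition~\ref{prop:ram}, and shows that an exact vanishing at an unramified point is incompatible with the leading slopes forced by the reduction modulo $p$ unless $\overline z$ lies on the zero locus of a canonically associated differential (pulled back along Frobenius), or else forces a higher-order degeneracy that one removes by passing to $J^3$ or iterating. This confines the contribution of the dominating component to the $\le p(2g-2)+6g$ residue discs where that differential degenerates, and combining it with the B\'ezout bound for the genuinely zero-dimensional part of $\overline Z$ gives the theorem. I expect this last step --- upgrading the picture modulo $p$, where $\overline Z$ may be positive-dimensional, to the exact residue-disc count --- to require by far the most work; it is precisely why one must invoke the full theory of arithmetic jet spaces and not merely the formal expansion of $\log$.
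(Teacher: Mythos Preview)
The paper does not give a proof of Theorem~\ref{thm:BMM}: it is stated with attribution to Buium and the citation \cite{buium:96}, and is used only as a black box in the final section. So there is no argument in the paper against which to compare your proposal.

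A few remarks on your sketch relative to Buium's actual proof. The overall architecture you describe --- arithmetic jet spaces, order-$\leq 2$ $\delta$-characters $\psi_1,\dots,\psi_g$ of $\Jac(X)$ killing torsion, pullback to $J^2X$, reduction to a degree count on the special fibre --- is indeed Buium's, and the shape of the constant you indicate (Frobenius-twist powers of $p$, a factor from the polarisation, a Hurwitz-type term) is right. However, your proposed resolution of the ``dominating component'' difficulty is misplaced. You suggest handling it by a Newton-polygon analysis of the $h_i$ on each residue disc ``in the style of Lemma~\ref{lemma:slop} and Proposition~\ref{prop:ram}''. That machinery belongs to the \emph{Coleman} step (Theorem~\ref{thm:CRMM}), whose purpose is to control \emph{ramified} points; it is local to a single disc and cannot by itself bound the number of discs, which is exactly what Theorem~\ref{thm:BMM} asserts. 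Buium's own argument stays entirely on the special fibre over $\overline{\F}_p$: one shows directly, by degree estimates for the $\overline{\psi}_i$ on a compactification of the jet space and the fact that $X$ is not a translate of an abelian subvariety (this is where $g>1$ enters), that the relevant zero locus does not dominate $X_{\overline{\F}_p}$, and then a B\'ezout count gives the explicit bound. No lift to characteristic zero and no slope analysis is involved in this step. Finally, as you yourself flag, the proposal is a plan rather than a proof: the paragraph beginning ``Overcoming this is the heart of Buium's argument'' does not actually carry out that step, and the step you outline would not do the job.
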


We now give the proof of Proposition \ref{prop:voloch}. Let $X$ be as in Proposition \ref{prop:voloch}. Suppose $z$ is in $X(\Z_p ^{\nr })$, and $\prod f_i (z)^{n_i }$ and $\prod f_i (z)^{m_i }$ are roots of unity. We may (and do) assume that the GCD$(n_i ) =$GCD$(m_i )=1$. Let $W_1 (\overline{\F }_p )=W(\overline{\F }_p )/p^2 $ denote the truncated $p$-typical Witt vectors of $\overline{\F }_p $. To prove Proposition \ref{prop:voloch}, it is enough to show that there are only finitely many $W_1 (\overline{\F }_p )$ points of $X$ such that, for some $(n_i )$ and $(m_i )$ both of GCD 1, $\prod f_i (z)^{n_i }$  and $\prod f_i (z)^{m_i }$ are in the image $D$ of $\mu _{\infty }(\Z _p ^{\mathrm{nr}})$ in $W_1 (\overline{\F }_p )^\times $. Since taking $p$th powers maps $W_1 (\overline{\F }_p )$ into $D$, we may reduce to the case that all the $n_i$ and $m_i $ are in the interval $\{ 0,\ldots ,p-1 \}$.

Hence to prove Proposition \ref{prop:voloch}, we may replace an \textit{infinite} collection of rank one tori with a \textit{finite} collection, namely the subtori of the form
\[
\prod x_i ^{n_i }=\prod x_i ^{m_i }\in \mu _{\infty }
\]
for $(n_i )$ and $(m_i )$ in $\{ 0,\ldots ,p-1 \}^3 $, linearly independent modulo $p$. It is then enough to prove the following proposition

\begin{proposition}\label{prop:voloch2}
Let $f$ and $g$ be functions on $X$ such that the divisors of $f$ and $g$ are not multiples of $p$, and $\dlog (f)$ and $\dlog (g)$ are $\F _p $-linearly independent in $\Omega _{\overline{\F }_p (X)|\overline{\F }_p }$. Then there are only finitely many points $z$ in $X(W_1 (k))$ such that $f(z)$ and $g(z)$ are both in $D$.
\end{proposition}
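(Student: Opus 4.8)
The plan is to work in the ring $W_1(k) = W_1(\overline{\F}_p)$ of length-two Witt vectors, i.e.\ the ring of Witt vectors of length $2$, and exploit the Frobenius lift it carries. Recall that $D \subset W_1(k)^\times$ is the image of $\mu_\infty(\Z_p^{\nr})$; the key structural fact is that an element $u \in W_1(k)^\times$ lies in $D$ if and only if its image under the map $\psi \colon W_1(k)^\times \to k$, $u \mapsto \phi(u)/u^p \bmod p$ (the ``arithmetic derivative'' or $\delta$-character of $\G_m$ at level one), vanishes — here $\phi$ is the Witt vector Frobenius and $\phi(u)/u^p$ is a $1$-unit, so reduces mod $p$ to an element of $k$. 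Indeed $\psi$ is a surjective homomorphism with kernel exactly the Teichm\"uller lifts, and the Teichm\"uller lifts of roots of unity are precisely $D$. So the condition ``$f(z), g(z) \in D$'' becomes the system $\psi(f(z)) = \psi(g(z)) = 0$ in $k$.

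First I would make the $W_1$-point $z \in X(W_1(k))$ explicit: it is a $k$-point $\overline{z}$ of the special fibre together with a deformation datum, so $X(W_1(k))$ fibres over $X(k)$ with fibres a torsor under the tangent space, and concretely $z$ is recorded by a pair $(\overline z, \xi)$ where $\xi$ lies in (a torsor under) $T_{\overline z}X \cong k$. Next I would compute $\psi(f(z))$ in terms of this datum. The point is that $\psi \circ f$, as a function on $X(W_1(k))$, decomposes into a ``horizontal'' part depending only on $\overline z$ — which one identifies with the reduction mod $p$ of $\dlog f$ evaluated appropriately, via the formula $\psi(f) \equiv \phi(f)/f^p = \exp\!\big(\tfrac1p(\phi^*\log f - p\log f)\big)$ unwound on residue discs, essentially the Cartier-operator / $(C - \mathrm{id})$ picture — plus a ``vertical'' part that is \emph{affine-linear in $\xi$} with leading coefficient $(\dlog f)(\overline z)$. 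So on the fibre over a fixed $\overline z$ with $(\dlog f)(\overline z) \neq 0$, the equation $\psi(f(z)) = 0$ pins $\xi$ down to a single value; and similarly for $g$. The finiteness then reduces to the special fibre: I must show there are finitely many $\overline z \in X(k)$ for which the two pinned-down values of $\xi$ coincide, \emph{together with} finitely many $\overline z$ lying on the locus where $(\dlog f)(\overline z) = 0$ or $(\dlog g)(\overline z) = 0$. The latter locus is a proper closed subscheme of $X_k$ (hence finite) precisely because $\dlog f \ne 0$ in $\Omega_{\overline\F_p(X)/\overline\F_p}$ — this is the divisors-not-multiples-of-$p$ hypothesis.

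The main obstacle, and the heart of the argument, is the coincidence locus: why is the set of $\overline z$ with $(\dlog f)(\overline z) \ne 0 \ne (\dlog g)(\overline z)$ and $\xi_f(\overline z) = \xi_g(\overline z)$ finite? If it were infinite it would be Zariski dense in $X_k$, forcing a nontrivial \emph{algebraic} relation between the two vertical coordinates, hence between $\phi^* f / f^p$ and $\phi^* g / g^p$ as elements of the function field of a suitable cover. Here the $\F_p$-linear independence of $\dlog f$ and $\dlog g$ in $\Omega_{\overline\F_p(X)/\overline\F_p}$ is exactly what forbids such a relation: one shows that a nontrivial algebraic dependence between $\psi(f)$ and $\psi(g)$ on $X_k$ would descend (by taking differentials and using that $d$ kills $p$-th powers, so $d\psi(f)$ recovers $\dlog f$ up to the Cartier operator) to an $\F_p$-linear dependence $a\,\dlog f + b\,\dlog g = 0$, contradicting the hypothesis. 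Packaging this cleanly — identifying the two ``vertical coordinate'' functions as regular functions on the arithmetic jet space $J^1 X$, noting $J^1 X$ is a smooth affine curve over $k$ dominating $X_k$, and invoking that a curve has a finite-dimensional space of everywhere-regular functions modulo constants with differentials controlled by $\dlog f, \dlog g$ — is where I would spend most of the work, and it is the step most likely to need care about what exactly ``$D$'' and the $\delta$-character look like integrally. Everything else (finiteness of the bad locus on $X_k$, the fibrewise linear-algebra reduction, bounding the number of $W_1$-points per residue disc) is then routine.
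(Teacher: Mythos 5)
Your overall framework is sound, and it is essentially the jet-space argument that the paper runs in disguise (the paper passes to the plane curve $h(x,y)=0$ and Voloch's criterion ``$h\nmid G$''; you keep the $\delta$-character $\psi$ and the fibration $X(W_1(k))\to X(k)$ explicit, which is fine). But the crux of the proposition is exactly the step you leave as a plan, and as sketched it does not yet constitute a proof. Saying that ``a nontrivial algebraic dependence between $\psi(f)$ and $\psi(g)$ would descend to an $\F_p$-linear dependence between $\dlog f$ and $\dlog g$'' is neither what the density argument hands you nor true in that generality. What Zariski density of the coincidence locus actually gives is one specific identity: choose an \'etale coordinate $x$ and the Frobenius lift $x\mapsto x^p$, write $u=\overline f'/\overline f$, $v=\overline g'/\overline g$, and $\kappa_f$ for the reduction of $(\phi(f)-f^p)/(pf^p)$; then $\psi\circ f=\kappa_f+u^p x'$ on the jet space, and eliminating the fibre coordinate $x'$ yields the identity $\kappa_f v^p=\kappa_g u^p$ of rational functions on $X_k$. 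The contradiction has to be extracted from this identity, and the way to do it is the computation you only allude to: $d\kappa_f=u^p x^{p-1}dx-u\,dx$, so applying $d$ (which kills the $p$-th power coefficients $u^p,v^p$) gives $uv^p=u^pv$, i.e.\ $(u/v)^{p-1}=1$, hence $u=cv$ with $c^{p-1}=1$, so $c\in\F_p^\times$ and $\dlog f=c\,\dlog g$. Note the last point is essential and absent from your sketch: the hypothesis is only $\F_p$-linear independence, so you must explain why the proportionality constant lies in $\F_p$ rather than $\overline\F_p$; it does, precisely because it arises as a $(p-1)$-st root of unity. This differential computation (the analogue of the paper's computation of $dG$) is the heart of the proof and is missing. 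Relatedly, $J^1X$ is not ``a smooth affine curve over $k$'': it is an affine-line bundle over $X_k$, hence a surface; the curve you need is the zero locus of $\psi\circ f$ inside it, a section over the open locus where $\dlog f\neq 0$, and the remark about a ``finite-dimensional space of everywhere-regular functions'' has no evident role.

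Two further points. The formula $\psi(u)=\phi(u)/u^p \bmod p$ literally gives $1$; you mean $(\phi(u)u^{-p}-1)/p \bmod p$ (harmless, but fix it). More substantively, your closing claim that ``bounding the number of $W_1$-points per residue disc is routine'' is false at degenerate discs: if $\dlog f$ and $\dlog g$ both vanish at $\overline z$ and the constant terms vanish as well (e.g.\ $f=1+x^2$, $g=1+2x^2+x^5$ at $\overline z=0$), then \emph{every} lift of $\overline z$ in $X(W_1(k))$ satisfies both conditions, so per-disc finiteness can fail outright under the stated hypotheses. What your argument (and, for that matter, the paper's own proof, which only controls the values $(f(z),g(z))\in D\times D$) genuinely yields is finiteness of the set of residue discs, i.e.\ of the image in $X(k)$ — which is all that Proposition \ref{prop:voloch} requires. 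Phrase your reduction accordingly rather than claiming finiteness of the points $z$ themselves fibre by fibre.
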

\begin{proof}
We will closely follow the strategy of Voloch \cite{voloch}, who proved an explicit bound on the number of points in $C(W_1 (\overline{\F }_p ))\cap D\times D$, where $C$ is a plane curve of degree less than $\sqrt{p-2}$.

Let $H\in \Z_p ^{\mathrm{nr}} [x,y]$ be minimal such that $H(f,g)=0$. It is enough to show that the intersection of the $W_1 (\overline{\F }_p )$-zeroes of $H$ with the roots of unity $D\times D$ is finite. Let $h\in \overline{\F }_p [x,y]$ be the reduction of $H$. As explained in \cite{voloch}, it is enough to show that $h(x,y)$ does not divide $G(x,y)$ in $\overline{\F }_p [x,y]$, where $G(x,y)$ is the reduction modulo $p$ of $(H^{\sigma }(x^p,y^p)-H(x,y)^p)/p$, and $\sigma $ is the lift of the Frobenius morphism from $\overline{\F }_p $ to $W(\overline{\F }_p )$

If $h$ divides $G$, then $dG$ vanishes on the zero locus of $h$. We have
\begin{align*}
dG & =(x^{p-1} \frac{\partial h}{\partial x}(x^p ,y^p )-h^{p-1}\frac{\partial h}{\partial x})dx \\
& +(y^{p-1} \frac{\partial h}{\partial y}(x^p ,y^p )-h^{p-1}\frac{\partial h}{\partial y})dy. 
\end{align*} 
We deduce that if $h$ divides $G$ then
\[
x^{p-1} \frac{\partial h}{\partial x}(x^p ,y^p )\frac{\partial h}{\partial y}=y^{p-1} \frac{\partial h}{\partial y}(x^p ,y^p )\frac{\partial h}{\partial x}
\]
on the zero locus of $h$.
Since the divisors of $f$ and $g$ are not multiples of $p$, $\frac{\partial h}{\partial x}$ and $\frac{\partial h}{\partial y}$ are both nonzero modulo $p$. Hence this further simplifies to
\[
x^{p-1} \frac{\partial h}{\partial x}^{p-1}=y^{p-1} \frac{\partial h}{\partial y}^{p-1}
\]
on the zero locus of $h$. Finally it is enough to show that there are not constants $a,b\in \overline{\F }_p$ such that
\[
x\frac{\partial h}{\partial x}+ay\frac{\partial h}{\partial y}+bh=0.
\]
Note that such a relation implies 
\[
\frac{dg}{g}-a\frac{df}{f}\equiv 0 \mod p
\]
on $X$, which contradicts our assumption of linear independence of the logarithmic differentials.
\end{proof}

\section{$p$-adic integration and functional transcendence}
To conclude the proof of Theorem \ref{thm:main}, we need to study functional transcendence properties of Coleman integrals on higher dimensional varieties, following \cite{d:unlikely} and \cite{hast:ax}. Let $T$ be a $g$-dimensional torus over a field $K$ of characteristic zero. Let $L$ be the Lie algebra of $T$. Let $\widehat{T}$ and $\widehat{L}$ be the associated formal groups and let 
\[
\Delta \subset \widehat{T}\times \widehat{L}
\]
be the graph of the formal logarithm.
\begin{theorem}[Ax, \cite{ax1971schanuel}]\label{thm:AS}
Let $V\subset T$ and $W\subset L$ be subvarieties.
If
\[
\dim (W)>\dim V-g,
\]
then $\pi (\Delta \cap \widehat{V}\times \widehat{W})$ is contained in the formal completion of a proper subgroup of $T$, where
\[
\pi :T\times L\to T
\]
is the projection.
\end{theorem}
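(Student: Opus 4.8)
The plan is to deduce Theorem~\ref{thm:AS} from Ax's original theorem on the functional Schanuel conjecture for the exponential: the statement is that theorem transcribed into the language of formal groups, so the substance is a choice of formal coordinates followed by a dimension count, the only nontrivial ingredient being Ax's theorem. Concretely, fix an irreducible component $Z$ of $\Delta\cap(\widehat V\times\widehat W)$ (the statement is vacuous if this is empty); all the formal schemes involved are supported at $(1,0)\in T\times L$, hence so is $Z$. Parametrise $Z$ birationally by a smooth formal disc with coordinates $s_1,\dots,s_m$, $m=\dim Z$, and set $F=K((s_1,\dots,s_m))$, a differential field for the derivations $\partial_j=\partial/\partial s_j$ with field of constants $K$. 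Pulling back to the disc the coordinate functions $t_1,\dots,t_g$ of $T$ and the linear coordinates $u_1,\dots,u_g$ of $L$ along $Z\hookrightarrow\widehat T\times\widehat L$ gives $y_i,x_i\in F$, and the fact that $Z$ lies on the graph of the formal logarithm says precisely that $x_i=\log y_i$, i.e.\ $\partial_j y_i/y_i=\partial_j x_i$ for all $i,j$; thus $(x_i)$, $(y_i)$ are in the situation of Ax's theorem. Two observations make the count work: since the formal logarithm is an isomorphism $\widehat T\isoarrow\widehat L$, the composite $Z\hookrightarrow\widehat T\times\widehat L\to\widehat L$ is a closed immersion, so the matrix $(\partial_j x_i)$ has rank $m=\dim Z$; and $\mathrm{tr.deg}_K K(x_1,\dots,x_g,y_1,\dots,y_g)$ equals the dimension of the Zariski closure $\widetilde Z\subset T\times L$ of the image of the disc, which lies in $V\times W$ and so has dimension at most $\dim V+\dim W$.

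Next I would invoke Ax's theorem in the sharp form $\mathrm{tr.deg}_K K((x_i),(y_i))\ge\rk(\partial_j x_i)+\dim_{\Q}\big(({\textstyle\sum_i}\Q x_i+K)/K\big)$. If the second summand is $<g$, some nontrivial $\Q$-linear combination of the $x_i$ lies in $K$; clearing denominators and using $dy_i/y_i=dx_i$ shows that a nontrivial monomial $\prod_i y_i^{\nu_i}$ ($\nu\in\Z^g\setminus\{0\}$) is constant, hence equal to $1$ because $Z$ passes through the origin, so $\pi(Z)$ is contained in the proper subgroup $\{\prod_i t_i^{\nu_i}=1\}$ of $T$ --- which is the desired conclusion. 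So it is enough to exclude the possibility that the second summand equals $g$: in that case Ax's inequality together with the two observations above forces $\dim V+\dim W\ge\mathrm{tr.deg}\ge\dim Z+g$, i.e.\ $\dim Z\le\dim V+\dim W-g$, contradicting the hypothesis that $Z$ is of larger than expected dimension, $\dim Z>\dim V+\dim W-g$.

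The one genuine obstacle is Ax's theorem itself; the rest is bookkeeping. If one wanted a self-contained account, the argument is differential-algebraic: assuming the transcendence bound fails, there are $F$-linear relations among the $dx_i$ in $\Omega_{F/K}$; exterior-differentiating them and using that each $dy_i/y_i$ is closed, Cartan's lemma shows that the coefficients occurring have differentials lying in the span of a maximal $F$-independent subset of $\{dx_1,\dots,dx_g\}$, and an induction on $g$ then produces a relation $\sum_i\nu_i\,dy_i/y_i=0$ with $\nu_i\in K$; finally a residue computation --- the residue of $dy_i/y_i$ along a prime divisor equals $\ord(y_i)$, so logarithmic forms that are $\Z$-independent remain $K$-independent --- upgrades this to an honest $\Z$-linear relation among the $y_i$. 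Making that last step work, i.e.\ the exterior-derivative bootstrap that forces the relevant coefficient functions to be constants, is where the difficulty lies.
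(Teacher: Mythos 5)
The paper gives no proof of Theorem \ref{thm:AS}: it is quoted as Ax's theorem, with the accompanying remark outsourcing the translation between Ax's original (differential-algebraic) statement and this formal-group formulation to \cite{tsimerman:unlikely}. Your proposal carries out precisely that translation and then quotes Ax's inequality, so it is the intended argument, and the deduction itself is sound: the full rank of $(\partial_j x_i)$ via the immersion $Z\hookrightarrow\widehat{L}$, the bound $\mathrm{tr.deg}_K K(x_\bullet ,y_\bullet )\leq \dim V+\dim W$ via the Zariski closure inside $V\times W$, the dichotomy on $\dim_\Q\bigl((\sum_i\Q x_i+K)/K\bigr)$, and the passage from a $\Q$-relation among the $x_i$ to a monomial relation $\prod_i y_i^{\nu_i}=1$ (using that $Z$ passes through the origin) are all correct and are exactly the standard dictionary.

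Two caveats. First, your contradiction step uses the hypothesis $\dim Z>\dim V+\dim W-g$ for the chosen component $Z$, which is not what the theorem as printed says: the printed condition $\dim W>\dim V-g$ is essentially vacuous (it fails only when $\dim V=g$ and $\dim W=0$), and with that reading the statement is false --- take $W=L$ and $V$ an irreducible curve through the identity generating $T$, so that $\pi(\Delta\cap\widehat{V}\times\widehat{L})=\widehat{V}$, which lies in no $\widehat{H}$ with $H\subsetneq T$. You have therefore silently corrected the statement to the standard per-component atypicality form of Ax--Schanuel, which is indeed what the application in Proposition \ref{prop:AStoC} needs (there $\dim V+\dim W-g=0$ and only positive-dimensional components matter); this substitution should be made explicit, and the conclusion should then be stated per component (finitely many, by Noetherianity of the complete local ring), each possibly inside a different proper subgroup. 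Second, the opening move ``parametrise $Z$ birationally by a smooth formal disc'' is not free: one should either invoke resolution for the excellent local ring $\mathcal{O}_Z$, or avoid it by taking a Noether normalisation $K[[t_1,\dots,t_m]]\hookrightarrow\mathcal{O}_Z$ and extending the derivations $\partial/\partial t_j$ to $\mathrm{Frac}(\mathcal{O}_Z)$, keeping track that the constant field is then only algebraic over $K$ (harmless after passing to $\overline{K}$). These are standard points, but they are exactly the bookkeeping that the ``dictionary'' consists of; the genuinely hard input remains Ax's theorem, which you correctly isolate (and whose proof you only sketch, which is fine since the paper also treats it as a black box).
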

\begin{remark}
For the dictionary between Ax's statement of the theorem and the one we give above, see \cite{tsimerman:unlikely}.
\end{remark}

This has the following corollary.
\begin{proposition}\label{prop:AStoC}
Let $X$ be as in Theorem \ref{thm:main}. Fix $(\overline{z}_1 ,\overline{z}_2 )$ in $X(\overline{\F }_p )^2 $. Let $D$ be a closed polydisc inside $](\overline{z}_1 ,\overline{z}_2 )[$. Then the set $S$ of points $(z_1 ,z_2 )$ in $D(\overline{\Q }_p )$ such that there are $n_1 ,n_2 $ in $\Z$, not both zero, such that $f_i (z_1 )^{n_1 }=f_i (z_2 )^{n_2 }$ for all $i$, is not Zariski dense in $X^2 $. Furthermore each irreducible component of the Zariski closure is contained in a translate of a subtorus of the form 
\[
\{ (x_i ,y_i )\in \mathbb{G}_m ^6 :x_i ^{a }=  y_i ^{b} \}
\]
for $a,b \in \Z ^2 -\{ (0,0)\}$.
\end{proposition}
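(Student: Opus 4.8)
The plan is to deduce the statement from Ax's theorem (Theorem~\ref{thm:AS}) for the torus $T=\G_m^6$, applied through the morphism $\phi\colon X^2\to T$ given by
\[
\phi(z_1,z_2)=\bigl(f_1(z_1),f_2(z_1),f_3(z_1),f_1(z_2),f_2(z_2),f_3(z_2)\bigr),
\]
whose image is a $2$-dimensional irreducible subvariety ($X$ is geometrically irreducible and the $f_i$ are non-constant). For $(a,b)\in\Z^2-\{(0,0)\}$ put $H_{(a,b)}=\{x_i^a=y_i^b:i=1,2,3\}\subset T$, a codimension-$3$ subtorus with Lie algebra $\mathfrak h_{(a,b)}\subset L:=\Lie T$; each $H_{(a,b)}$ is a finite disjoint union of cosets of $H_{(a/d,\,b/d)}$, $d=\gcd(a,b)$. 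As $\log$ is a homomorphism killing roots of unity, every point of $S$ lies in some $\phi^{-1}(H_{(n_1,n_2)})$ with $(n_1,n_2)\neq(0,0)$, where moreover $\log\circ\phi$ takes values in $\mathfrak h_{(n_1,n_2)}$; thus $S\subseteq\bigcup_{(a,b)}\phi^{-1}(H_{(a,b)})$, a countable union of closed subvarieties.

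I would fix an irreducible component $Z$ of $\overline{S}^{\Zar}$ and aim to show that $\overline{\phi(Z)}^{\Zar}$ lies in a translate of some $H_{(a,b)}$ and that $Z\neq X^2$. First reduce (by the device of Lemma~\ref{lemma:integral}, or directly) to the case where the $f_i$ are units on the tube $](\overline{z}_1,\overline{z}_2)[$, so that $\ell:=\log\circ\phi$ is a genuine rigid analytic map on $D$ computing the formal logarithm of $\phi$. Since $D$ is Zariski dense in $X^2$ and $S\cap Z$ is Zariski dense in $Z$, the analytic set $D\cap Z$ is Zariski dense in $Z$; being a closed analytic subvariety of the affinoid $D$, it has finitely many irreducible components, and ($Z$ irreducible) one of them, $W$, satisfies $\overline{W}^{\Zar}=Z$ with $S\cap W$ Zariski dense in $Z$. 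The core step is to feed this into Theorem~\ref{thm:AS}: one forms the Zariski closure $\mathcal G\subset T\times L$ of the graph $\{(\phi(z),\log\phi(z)):z\in S\cap W\}$, whose projection to $T$ is $\overline{\phi(Z)}^{\Zar}$ (dimension $\leq2$) and whose projection to $L$ lies in the determinantal locus $\overline{\bigcup_{(a,b)}\mathfrak h_{(a,b)}}^{\Zar}$; translating to the identity by $\phi(P)$ for a base point $P\in S\cap W$, the germ at $P$ of $\{(\phi(z),\log\phi(z)):z\in W\}$ meets $\Delta$ inside the relevant formal completions. Since $\dim T=6$ exceeds $\dim X^2=2$ the hypothesis of Theorem~\ref{thm:AS} is met, and its conclusion forces this germ --- hence, $W$ being irreducible, Zariski dense in $Z$, and $D$ Noetherian, also $\overline{\phi(Z)}^{\Zar}$ --- into a translate $\phi(P)\cdot T'$ of a proper subtorus $T'\subsetneq T$.

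Two short arguments then finish the proof. If $Z=X^2$, then $\overline{\phi(X^2)}^{\Zar}\subseteq\phi(P)T'$, contradicting that $\phi(X^2)$ generates $\G_m^6$: the latter holds because $f_1,f_2,f_3$ are independent in $K(X)^\times/K^\times$, so that a character $(\alpha,\beta)\in\Z^3\times\Z^3$ trivial on a translate of $\phi(X^2)$ makes, upon fixing one variable, first $\prod f_i^{\alpha_i}$ and then $\prod f_i^{\beta_i}$ constant, forcing $\alpha=\beta=0$. To sharpen ``proper subtorus'' to the required form, note that $\phi(Z\cap S)$ is Zariski dense in the irreducible variety $\overline{\phi(Z)}^{\Zar}$ and is covered by the countably many $H_{(a,b)}$; since a variety over the uncountable field $\overline{\Q}_p$ is not a countable union of proper subvarieties, $\overline{\phi(Z)}^{\Zar}\subseteq H_{(a_0,b_0)}$ for a single $(a_0,b_0)\neq(0,0)$, and intersecting with the coset decomposition of $H_{(a_0,b_0)}$ and using irreducibility confines $\phi(Z)$ to one coset of $H_{(a_0/d,\,b_0/d)}$.

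I expect the main obstacle to be the central application of Theorem~\ref{thm:AS}: transforming the rigid analytic data on $D$ --- the analyticity of $\log\circ\phi$ and the fact that it carries a Zariski-dense subset of $W$ into $\bigcup_{(a,b)}\mathfrak h_{(a,b)}$ --- into the purely formal input of Ax's theorem (pinning down the correct $V\subset T$ and $W\subset L$ through the graph $\mathcal G$, with the right base point and formal completions), and then propagating the formal containment in a subgroup back to an algebraic containment using that $W$ is Zariski dense in $Z$ and $D$ is Noetherian. A lesser subtlety is extracting the precise shape $H_{(a,b)}$, where one uses the multiplicative relations defining $S$, not merely the output of Ax. I would also remark that, since each $\phi^{-1}(H_{(a,b)})$ is already a proper subvariety of $X^2$, the proposition in fact follows from the uncountability of $\overline{\Q}_p$ alone, with no $p$-adic analytic input; the route through Theorem~\ref{thm:AS} is the one that persists for curves, and for higher-dimensional subvarieties, in $\G_m^n$.
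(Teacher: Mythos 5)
Your overall strategy is the same as the paper's: map $X^2$ into $\G_m^6$, observe that at points of $S$ the two columns of the $3\times 2$ matrix $(\log f_i(z_j))$ are proportional, so that the logarithm lands in the rank-$\leq 1$ determinantal locus of $\Lie(\G_m^6)$, work on the closed polydisc $D$ where the logarithms are rigid analytic, reduce to finitely many irreducible analytic components and a formal completion at a base point, and apply Theorem \ref{thm:AS} (whose hypothesis is automatic since $\dim X^2=2<6$). But the central step has a gap, which you yourself flag without resolving: you take $Z$ to be a component of the Zariski closure of $S$, pick an analytic component $W$ of $D\cap Z$, and feed the germ at $P$ of the graph of $\log\circ\phi$ over $W$ into Theorem \ref{thm:AS}. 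The determinantal condition, however, is only known at the points of $S\cap W$, not on $W$, and the points of $S\cap W$ may be analytically isolated; so the formal germ you actually control may be zero-dimensional and Ax then gives nothing about $Z$. The paper avoids this by reversing the order of operations: the analytic object is defined first, as the zero locus in $D$ of the $2\times 2$ minors $\log f_i(z_1)\log f_j(z_2)-\log f_j(z_1)\log f_i(z_2)$ --- a single closed rigid-analytic condition containing $S$, replacing your countable union of conditions indexed by $(n_1,n_2)$ --- and only its finitely many components are then completed at a point and compared with $\Delta\cap\widehat V\times\widehat W$; on such a component the logarithm genuinely maps into the (translated) determinantal subvariety of $L$, and an equation of the proper subtorus produced by Ax, vanishing on the formal germ, vanishes on the whole component and hence on its Zariski closure. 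Your argument can be patched in the same spirit by replacing your $W$ with the analytic closure of $S\cap W$ in $D$ (this lies in the minor locus, since the minors are rigid analytic and vanish on $S\cap W$), but as written the step fails.

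The second problem is the ``sharpening'' step and your closing remark. The fact you invoke is not true in the form you use it: over an uncountable algebraically closed field, if the \emph{full} point set of an irreducible variety is a countable union of closed subvarieties then one of them is everything, but a \emph{Zariski-dense subset} can perfectly well be spread over countably many proper closed subvarieties (torsion points of $\G_m^2$, or $(\G_m^3)^{[2]}$ itself, are standard examples, and the introduction of the paper stresses precisely this density phenomenon). So from ``$\phi(Z\cap S)$ is dense in $\overline{\phi(Z)}^{\Zar}$ and covered by the $H_{(a,b)}$'' you cannot conclude $\overline{\phi(Z)}^{\Zar}\subseteq H_{(a_0,b_0)}$. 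For the same reason your final remark --- that the proposition follows from uncountability alone, with no $p$-adic input, because each $\phi^{-1}(H_{(a,b)})$ is a proper subvariety --- is wrong: a countable union of proper subvarieties can be Zariski dense, and excluding this on $D$ is exactly the nontrivial content supplied by the $p$-adic logarithm together with Ax--Schanuel. (Your verification that no nontrivial character is constant on a translate of $\phi(X^2)$, using independence of the $f_i$ in $K(X)^\times/K^\times$, is fine.) The refined shape of the subtorus must likewise be extracted from the analytic and formal data --- the proportionality of the two log-columns along the whole analytic component, combined with the output of Theorem \ref{thm:AS} and the independence of the $f_i$ --- rather than from a countability argument.
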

\begin{proof}
$S$ is contained in the zero locus $Z$ of $\log f_i (z_1 )\log f_j (z_2 )-\log f_j (z_1 )\log f_i (z_2 )$. Since $\log (f_i )|_D$ is a rigid analytic function on a closed polydisc, to prove that $Z$ is not Zariski dense, it is enough to prove it for each irreducible component, and for this it is enough to prove it at the formal completion of a point $(z_1 ,z_2 )$ lying in $S$. View $X^2$ as a subvariety $Y$ of $\mathbb{G}_m ^6$ via the maps
\[
(w_1 ,w_2 )\mapsto (f_i (w_1 )/f_i (z_1 ),f_i (w_2 )/f_i (z_2 )).
\]
Let $W\subset \mathbb{A}^6$ be the codimension two subvariety of $3$ by $2$ matrices $M_{ij}$ such that $\rk (M_{ij}+f_i (z_j ))\leq 1$. Then the formal completion of $Z$ at $(z_1 ,z_2 )$ is isomorphic to $\pi (\Delta \cap (\widehat{Y}\times \widehat{W}))$, and the proposition follows from Theorem \ref{thm:AS}.
\end{proof}

\section{Completion of proof}
We first explain how the results of Buium and Coleman complete the proof of Manin--Mumford. Let $X$ and $p$ be as in Theorems \ref{thm:CRMM} and \ref{thm:BMM}. By Theorem \ref{thm:CRMM}, all torsion points are unramified. By Theorem \ref{thm:BMM}, only finitely many points of $X(\overline{\F }_p )$ lift to a torsion point in $X(\Q _p ^{\mathrm{nr}})$. Hence it is enough to prove that there are only finitely many torsion points in $]\overline{z}[(\Q _p ^{\mathrm{nr}})$. But this follows from standard properties of torsion points on abelian varieties, or (more in the spirit of the paper) by elementary properties of Coleman integrals -- the function $\int _b \omega $ has at most one unramified zero in $]\overline{z}[$ if $\omega $ is a differential which does not vanish at $\overline{z}$.

We now complete the proof of Theorem \ref{thm:main}, following an analogous strategy. It is enough to prove finiteness of $X(\overline{\Q }) \cap (\mathbb{G}_m ^3 )^{[2]}\cap ]\overline{z}[(\overline{\Q }_p )$, where $\overline{z}$ is a fixed point in $X(\overline{\F }_p )$. Indeed, by Lemma \ref{lemma:fin}, the image of ramified points of $X(\overline{\Q }_p )\cap (\mathbb{G}_m ^3 )^{[2]}$ in $X(\overline{\F }_p )$ is finite, and by Proposition \ref{prop:voloch} the image of $X(\Q_p ^{\mathrm{nr}})\cap (\mathbb{G}_m ^3 )^{[2]}$ in $X(\overline{\F }_p )$ is finite. Furthermore Proposition \ref{prop:ram} implies it is enough to prove this with $]\overline{z}[$ replaced with a closed disc of radius $p^{-\frac{1}{2g+d}}$.

We will deduce this result from Siegel's theorem \cite{siegel}. That is, we will show that all points in $X(\overline{\Q })\cap (\mathbb{G}_m ^3 )^{[2]}$ are defined over a fixed finite extension $L$ of $K$. By part (2) of Lemma \ref{lemma:integral}, these points have to be integral at all but finitely many primes, and hence Siegel's theorem implies finiteness. This is analogous to the proof of Mordell--Lang in \cite{DP}, where we use the Buium--Coleman strategy to reduce Mordell--Lang to Mordell.

As in \cite{DP}, to bound the field of definition we will consider Coleman integrals on a higher dimensional space and apply Ax--Schanuel. Fix $\overline{z}_1 ,\overline{z}_2 \in X(\overline{\F }_p )$ lying in the image of $X(\overline{\Q }_p )\cap (\mathbb{G}_m ^3 )^{[2]}$. 
Let $C_1 ,\ldots ,C_n $ be the positive dimensional irreducible components of the Zariski closure of 
\[
Z=\{ (z_1 ,z_2 )\in ](\overline{z}_1 ,\overline{z}_2 )[(\overline{\Q}_p ): \rk (\log (f_i (z_j ))_{1\leq i\leq 3,1\leq j\leq 2)} \leq 1\}\subset X(\overline{\Q }_p )^2 .
\]
By Proposition \ref{prop:ram}, this is the same as 
\[
\{ (z_1 ,z_2 )\in D: \rk (\log (f_i (z_j ))_{1\leq i\leq 3,1\leq j\leq 2)} \leq 1\}\subset X(\overline{\Q }_p )^2 .
\]
where $D$ is a closed polydisc inside $](\overline{z}_1 ,\overline{z}_2 )[$. Hence by Proposition \ref{prop:AStoC}, each $C_i $ is a curve contained in a torus translate of the form
\begin{equation}\label{eqn:Ti}
T_i =\{ (x_j ,y_j )\in \mathbb{G}_m ^6 : x_j ^{a_i} =\lambda _{i} y_j ^{b_i} \}
\end{equation}
for some $\lambda _{i} $ in $\overline{\Q }$ and coprime $a_i ,b_i \in \Z$. Let $L/K $ be a finite extension containing all the $\lambda _{i}$.

Now suppose $C$ is a positive dimensional irreducible component of the Zariski closure of the intersection of $Z$ with the set 
\[
\{ (x,\sigma (x)):x\in X(\overline{\Q }),\sigma \in \Gal (\overline{L}|L) \}.
\]
Then $C$ is one of the $C_i$ above. In particular it contains infinitely many points $(x,\sigma (x))$ such that $f_j (x)^{a_i }=\lambda _i f_j (\sigma (x))^{b_i }$. We deduce that $f_j (x)$ is in the divisible hull of $\lambda _i$, i.e. in the subgroup
\[
\cup _{N>0}\{ v\in \overline{\Q }_p ^\times :v^N \in \lambda _i ^{\Z } \}.
\]
If $a_i \neq b_i$ we have reduced to Mordell--Lang for a finite number of curves inside $\mathbb{G}_m ^2 $, which is a theorem of Liardet \cite{liardet}.

Hence the only remaining case to deal with is where $a_i =b_i $ and $\lambda _i $ is a root of unity. In this case we see that there is an automorphism $\tau $ of $X$ such that $f_j (\tau (x))=\zeta f_j (x)$ for some root of unity $\zeta $. We say $X$ is \textit{minimal} if the map $X\to \mathbb{G}_m ^3 $ does not factor through a map
\begin{equation}\label{eqn:tori}
\theta =((.)^{n_1 },(.)^{n_2 },(.)^{n_3 }):\mathbb{G}_m ^3 \to \mathbb{G}_m ^3 .
\end{equation}
with some of the $n_i >1$. For any non-minimal $X$, there is map of tori $\theta $ as in \eqref{eqn:tori} such that $\theta (X)$ is minimal, and Theorem \ref{thm:main} for $\theta (X)$ implies Theorem \ref{thm:main} for $X$. The existence of an automorphism $\tau $ as above with $\zeta \neq 1 $ implies that $X$ is non-minimal. Hence we may (and do) assume that $X$ is minimal, and hence that $\zeta  =1$, reducing to the case that $C$ is diagonal, and hence to Siegel's theorem.

\bibliography{bib_ZP}
\bibliographystyle{alpha}

\end{document}